\title{Three-field block-preconditioners for models of coupled
  magma/mantle dynamics\thanks{This work was supported by grants
    NE/I026995/1 and NE/I023929/1 from the UK Natural Environment
    Research Council}}
\author{Sander Rhebergen\thanks{Mathematical Institute, University of
    Oxford, Andrew Wiles Building, Radcliffe Observatory Quarter,
    Woodstock Road, Oxford OX2 6GG, United Kingdom and Department of
    Earth Sciences, University of Oxford, South Parks Road, Oxford OX1
    3AN, United Kingdom (\url{sander.rhebergen@maths.ox.ac.uk}).}
  \and Garth~N.~Wells\thanks{Department of Engineering, University of
    Cambridge, Trumpington Street, Cambridge CB2 1PZ, United Kingdom
    (\url{gnw20@cam.ac.uk}).}
  \and Andrew~J.~Wathen\thanks{Mathematical
    Institute, University of Oxford,
    Andrew Wiles Building, Radcliffe Observatory Quarter, Woodstock
    Road, Oxford OX2 6GG, United Kingdom
    (\url{andy.wathen@maths.ox.ac.uk}).}
  \and Richard~F.~Katz\thanks{Department of Earth Sciences,
    University of Oxford,
    South Parks Road, Oxford OX1 3AN, United Kingdom
    (\url{richard.katz@earth.ox.ac.uk}).}}
\begin{document}
\maketitle
\slugger{mms}{xxxx}{xx}{x}{x--x}
\begin{abstract}
  For a prescribed porosity, the coupled magma/mantle flow equations
  can be formulated as a two-field system of equations with velocity
  and pressure as unknowns. Previous work has shown that while optimal
  preconditioners for the two-field formulation can be obtained, the
  construction of preconditioners that are uniform with respect to
  model parameters is difficult. This limits the applicability of
  two-field preconditioners in certain regimes of practical interest.
  We address this issue by reformulating the governing equations as a
  three-field problem, which removes a term that was problematic in
  the two-field formulation in favour of an additional equation for a
  pressure-like field.  For the three-field problem, we develop and
  analyse new preconditioners and we show numerically that they are
  optimal in terms of problem size and less sensitive to model
  parameters, compared to the two-field preconditioner. This extends
  the applicability of optimal preconditioners for coupled
  mantle/magma dynamics into parameter regimes of physical interest.
\end{abstract}
\begin{keywords}
  Magma dynamics, mantle dynamics, finite element method,
  preconditioners.
\end{keywords}
\begin{AMS}
  65F08, 76M10, 86A17, 86-08.
\end{AMS}
\pagestyle{myheadings}
\thispagestyle{plain}
\markboth{{\footnotesize RHEBERGEN, WELLS,
    WATHEN, AND KATZ}}%
{{\footnotesize THREE-FIELD PRECONDITIONERS FOR COUPLED
    MAGMA/MANTLE DYNAMICS}}
\section{Introduction}

In this paper we consider numerical methods to efficiently solve the
linear system arising from the discretization of the equations for
coupled magma/mantle dynamics. These partial differential equations,
derived by \citet{McKenzie:1984}, model the two-phase flow of
partially molten regions of the Earth's mantle.  High ambient
temperatures enable slowly creeping flow of crystalline mantle rock,
and also permit melting of certain mantle minerals.  Melting produces
magma that resides within an interconnected network of pores amid the
mantle grains.  The governing equations describe the creeping flow of
the high-viscosity, solid mantle matrix and the porous flow of the
low-viscosity magma.  Although both the magma and mantle are
individually incompressible, the two-phase mixture permits
\emph{compaction}: non-zero convergence of the solid flux is
balanced by non-zero divergence of the magma flux (or vice
versa). Compaction therefore expels (or imbibes) magma locally,
changing the volume fraction of magma, termed the porosity. Compaction
flow is associated with a bulk viscosity and compaction stresses; it
gives rise to many of the interesting features of the coupled
dynamics. In a typical strategy for computing these dynamics,
solutions for the solid velocity field and the magma pressure field
are obtained for a fixed porosity field; the velocity and pressure are
then used to update the porosity. The magma velocity field can be
obtained diagnostically from the pressure and solid velocity fields.

Discretization of the elliptic equations for solid velocity and
pressure results in a linear system of algebraic equations that can be
expressed in a $2\times 2$ block-matrix format. Preconditioners are
crucial to efficiently solve the resulting system by iterative
methods; \citet{Rhebergen:2014} developed a diagonal block
preconditioner for this system and proved optimality with respect to
problem size. From numerical experiments, however, it was found that
performance of the preconditioner deteriorates at high values of the
bulk-to-shear-viscosity ratio. This parameter regime, which
corresponds to low values of porosity, is common in coupled
magma/mantle dynamics simulations: for example, at the boundary
between unmolten and partially molten mantle, where porosity varies
continuously from zero through values $~$1\%.  Such situations make
the preconditioner in \citep{Rhebergen:2014} of limited practical use.
At low values of porosity, the compaction stresses can become dominant
over the shear stresses.  In this case, the contribution of a
`grad-div' term in the momentum balance equation becomes significant;
such terms are known to be problematic for standard multigrid
methods. The two-field preconditioner in \citep{Rhebergen:2014} relied
on multigrid methods for the matrix blocks. The manifestation of the
problem was increasing Krylov solver iteration counts as the
bulk-to-shear-viscosity ratio increased.

In this paper, to circumvent the troublesome `grad-div' term, we
introduce a `compaction pressure' field, as was done by
\citet{Katz:2007} and \citet{Keller:2013}, and we reformulate the
problem as a three-field system.  This approach is also used in nearly
incompressible elasticity.  
Discretizing the model leads to a linear system of equations that may
be expressed in a $3\times 3$ block matrix format for which we develop
and analyse new block preconditioners in this work.  By introducing a
compaction pressure field, the size of the system is increased
compared to the $2\times 2$ block matrix. The relative increase in
degrees of freedom is limited, however, as the degrees of freedom for
the compaction pressure (like the degrees of freedom of the fluid
pressure) are fewer than the degrees of freedom of the solid velocity.
Moreover, we will demonstrate through numerical examples that
effective preconditioners for the three-field problem compensate for
the addition of an extra scalar field to the problem.

The remainder of this paper is structured as follows.  In
Section~\ref{s:governingequations} we present the two- and three-field
governing equations. We describe a weak formulation in
Section~\ref{s:weakform}, develop and analyse a lower block triangular
preconditioner in Section~\ref{s:preconditioning} and then discuss a
diagonal block preconditioner in Section~\ref{s:blockdiagonal_P}. In
Section~\ref{s:numericalsimulations} we verify our analysis by two and
three dimensional numerical simulations. Conclusions are drawn in
Section~\ref{s:conclusions}.

\section{Governing equations}
\label{s:governingequations}

On a domain $\Omega \subset \mathbb{R}^{d}$, where $1 \le d \le 3$,
for a given porosity field $\phi \in [0, 1]$ the non-dimensional
two-phase flow equations that describe coupled magma/mantle dynamics
are given by
\begin{subequations}
  \label{eq:mckenzie_nd}
  \begin{alignat}{1}
    \label{eq:mckenzie_nd_a}
    -\nabla\cdot\left(\eta{\bf D}{\bf u} \right)
    + \nabla p &=
    \nabla\left(\left(\zeta -
        \tfrac{1}{3}\eta\right)\nabla\cdot{\bf u} \right)
    +\phi{\bf e}_3,
    \\
    \label{eq:mckenzie_nd_b}
    \nabla\cdot{\bf u} &=
    \nabla\cdot\left(k\left(\nabla p
        - {\bf e}_3\right)\right),
  \end{alignat}
\end{subequations}
where $\eta > 0$ is the shear viscosity, ${\bf u}$ is the matrix
velocity, ${\bf D}{\bf u} = (\nabla{\bf u} + (\nabla{\bf u})^T)/2$ is
the total strain rate, $p$ is the dynamic pressure, $\zeta > 0$ is the
bulk viscosity, $k \ge 0$ is the permeability, and ${\bf e}_{3}$ is
the unit vector in the direction aligned with gravity (i.e., ${\bf
  e}_3 = (0, 1)$ when $d = 2$ and ${\bf e}_3=(0, 0, 1)$ when $d = 3$).
Throughout this paper we take the porosity $\phi$ to be a function of
${\bf x} \in \Omega$. Constitutive relations are required for the
permeability $k$, shear viscosity $\eta$ and bulk viscosity $\zeta$.
For now we just mention that $k$, $\eta$ and $\zeta$ are usually
functions of the porosity $\phi$. For more details on the derivation
of the two-phase flow equations~\eqref{eq:mckenzie_nd} we refer to
\citet{McKenzie:1984}. The non-dimensionalization of these equations
is presented in Appendix~\ref{ap:nondim-2-phase}.

In \citet{Rhebergen:2014} we studied \eqref{eq:mckenzie_nd} for the
restricted case of constant shear viscosity, constant bulk viscosity
and a spatially variable permeability that is independent of
porosity. These simplifications lead to the following system of
equations:
\begin{subequations}
  \label{eq:mckenzie_old}
  \begin{alignat}{1}
    \label{eq:mckenzie_old_a}
    -\nabla\cdot{\bf D}{\bf u}
    + \nabla \Tilde{p} &=
    \nabla \left(\alpha \nabla\cdot{\bf u} \right)
    +\phi{\bf e}_3/\eta,
    \\
    \label{eq:mckenzie_old_b}
    \nabla\cdot{\bf u}
    &=
    \nabla \cdot \left(\Tilde{k}\left(\nabla \Tilde{p}
        - {\bf e}_3/\eta\right)\right),
  \end{alignat}
\end{subequations}
where $\alpha = \zeta / \eta - 1/3$, $\Tilde{p} = p / \eta$, and
$\Tilde{k}=\eta k$. In \cite{Rhebergen:2014} we developed and analysed
a diagonal block preconditioner for a mixed finite element
discretization of \eqref{eq:mckenzie_old}. Combined with a Krylov
method, the preconditioner developed in \citep{Rhebergen:2014}
resulted in an optimal solver in terms of the problem size, but was
not uniform with respect to the model parameters. In particular, as
$\alpha$ increased the iteration count for the solver to reach a set
tolerance increased. This was attributed to the performance of standard
multigrid (geometric and algebraic) when the relative contribution of
the $\nabla(\nabla \cdot {\bf u})$ term becomes
significant~\citep{Rhebergen:2014}.

In this paper we develop new preconditioners for a reformulated system
of equations in which the $\nabla(\nabla \cdot {\bf u})$ term does not
appear explicitly. To achieve this we return to~\eqref{eq:mckenzie_nd}
and introduce the auxiliary variable~$p_c = -\zeta \nabla \cdot {\bf
  u}$, which allows us to write~\eqref{eq:mckenzie_nd} as
\begin{subequations}
  \label{eq:three-field}
  \begin{alignat}{1}
    \label{eq:three-field_a}
    -\nabla\cdot\left(\eta\left({\bf D}{\bf u}
        - \tfrac{1}{3}\nabla\cdot{\bf u}\mathbb{I}\right)\right)
    + \nabla p + \nabla p_c &= \phi{\bf e}_3,
    \\
    \label{eq:three-field_b}
    -\nabla\cdot{\bf u} +
    \nabla\cdot k\nabla p &=
    \nabla\cdot k{\bf e}_3,
    \\
    \label{eq:three-field_c}
    - \nabla\cdot {\bf u} - \zeta^{-1}p_c &= 0.
  \end{alignat}
\end{subequations}
The auxiliary variable $p_c$ is also known as the compaction pressure
(see~\citep{Katz:2007,Keller:2013}, for example).  Decomposing the
boundary of the domain by $\Gamma_{D} \cup \Gamma_N = \partial \Omega$
where $\Gamma_{D} \cap \Gamma_N = \emptyset$, and denoting the outward
unit normal vector on $\partial \Omega$ by ${\bf n}$, we consider the
following boundary conditions:
\begin{equation}
  \label{eq:bcs}
  \begin{split}
    {\bf u} &= {\bf g} \qquad \mbox{on}\ \Gamma_D,
    \\
    \eta{\bf D}{\bf u}\cdot{\bf n} - \left(\tfrac{1}{3}\eta\nabla\cdot{\bf u}
    + p + p_c\right){\bf n} &=  {\bf g}_N \quad\ \mbox{on} \ \Gamma_N,
    \\
      -k\left(\nabla p - {\bf e}_3\right)\cdot{\bf n} &= 0
      \qquad \mbox{on} \ \partial \Omega,
  \end{split}
\end{equation}
where ${\bf g} : \Gamma_D \to \mathbb{R}^d$ and ${\bf g}_N : \Gamma_N
\to \mathbb{R}^d$ are given boundary data. In the case $\partial
\Omega = \Gamma_D$, ${\bf g}$ is constructed to satisfy the
compatibility condition
\begin{equation}
  0 = \int_{\partial\Omega}{\bf g}\cdot{\bf n} \dif s.
\end{equation}
Note that the compatibility condition implies $\int_{\Omega}
\zeta^{-1} p_c \dif x = 0$ when $\Gamma_{D} = \partial \Omega$.

\section{Discrete formulation}
\label{s:weakform}

Assume $\Gamma_{D} = \partial \Omega$ and, without loss of generality,
homogeneous boundary conditions on ${\bf u}$. Define the function
space $L_0^2:=L_0^2(\Omega)=\{q\in L^2(\Omega) : \int_{\Omega}q\,dx =
0\}$ and let ${\bf X}_{h} \subset {\bf H}_{0}^{1}$ and $M_{h} \subset
(H^{1} \cap L_{0}^{2})$ be finite dimensional spaces. A mixed finite
element weak formulation for~\eqref{eq:three-field} is then given by:
find $({\bf u}_h, p_h, p_{ch}) \in {\bf X}_h \times M_h \times M_h$
such that
\begin{subequations}
  \label{eq:3-field-dwf}
  \begin{alignat}{3}
    \label{eq:3-field-dwf-a}
    a({\bf u}_h, {\bf v}) + b(p_h, {\bf v}) + b(p_{ch}, {\bf v}) &=
    \int_{\Omega}\phi {\bf e}_{3} \cdot {\bf v} \dif x
    \quad &&\forall {\bf v} \in {\bf X}_h,
    \\
    \label{eq:3-field-dwf-b}
    b(q, {\bf u}_h) - c(p_h, q)
    &= -\int_{\Omega} k{\bf e}_3 \cdot \nabla q \dif x \quad && \forall q\in M_h,
    \\
    \label{eq:3-field-dwf-c}
    b(\omega,{\bf u}_h) - d(p_{ch}, \omega) &= 0 \quad &&\forall \omega \in M_h,
  \end{alignat}
\end{subequations}
where
\begin{subequations}
  \label{eq:bilinForms}
  \begin{alignat}{1}
    \label{eq:bilinForms_a}
    a({\bf u},{\bf v})
    &=
    \int_{\Omega}\eta{\bf D}{\bf u}:{\bf D}{\bf v} \dif x
    -\int_{\Omega}\tfrac{1}{3}\eta(\nabla\cdot{\bf u})(\nabla\cdot{\bf v})
    \dif x,
    \\
    \label{eq:bilinForms_b}
    b(p,{\bf v}) &= -\int_{\Omega} p \nabla\cdot{\bf v} \dif x,
    \\
    \label{eq:bilinForms_c}
    c(p,q) &= \int_{\Omega}k\nabla p\cdot\nabla q \dif x,
    \\
    \label{eq:bilinForms_d}
    d(p,\omega) &= \int_{\Omega}\zeta^{-1} p\omega \dif x.
  \end{alignat}
\end{subequations}
We assume the choice of spaces ${\bf X}_h$ and $M_h$ satisfy the
inf-sup stability condition, but postpone the choice of the finite
element spaces until Section~\ref{s:numericalsimulations}.

Let $u \in \mathbb{R}^{n_{u}}$ be the vector of discrete velocity with
respect to the basis for ${\bf X}_h$, and let $p \in N^{n_p} = \{q \in
\mathbb{R}^{n_p} | q\ne 1\}$ be the vector of the discrete pressure
and $p_c\in N^{n_p}$ the vector of discrete compaction pressure, with
respect to the basis for~$M_h$. The discrete
system~\eqref{eq:3-field-dwf} can then be written in block matrix form
as
\begin{equation}
  \label{eq:blockMatrix}
  \begin{bmatrix}
    K_{\eta} & G^T & G^T \\
    G & -C_k & 0 \\
    G & 0 & -Q_{\zeta}
  \end{bmatrix}
  \begin{bmatrix}
    u \\ p \\ p_c
  \end{bmatrix} =
  \begin{bmatrix}
    f \\ g \\ 0
  \end{bmatrix},
\end{equation}
where $K_{\eta}$, $G$, $C_k$ and $Q_{\zeta}$ are the matrices obtained
from the discretization of the bi-linear forms $a(\cdot,\cdot)$,
$b(\cdot,\cdot)$, $c(\cdot,\cdot)$ and $d(\cdot,\cdot)$, respectively.
This is the system for which we wish to develop and deploy effective
preconditioners.

\section{Three-field block-preconditioners}
\label{s:preconditioning}

We now formulate and analyse block preconditioners for the system
in~\eqref{eq:blockMatrix}. To achieve this, we assume that $0 <
\eta,\zeta < \infty$, and $0 \le k < \infty$ are constants, in which
case \eqref{eq:blockMatrix} can be re-written as
\begin{equation}
  \label{eq:blockMatrix_c}
  \underbrace{
    \begin{bmatrix}
      \eta K & G^T & G^T \\
      G & -k C & 0 \\
      G & 0 & -\zeta^{-1}Q
    \end{bmatrix}
  }_{\mathcal{A}}
  \begin{bmatrix}
    u \\ p \\ p_c
  \end{bmatrix}
  =
  \begin{bmatrix}
    f \\ g \\ 0
  \end{bmatrix}.
\end{equation}
This format of the equations will guide us towards the correct scaling
of the different blocks in the preconditioners for the case of
non-constant $\eta$, $\zeta$ and~$k$.

To simplify the notation in the following, we introduce the shorthand
\begin{equation}
  \Bar{S} = G K^{-1} G^{T}.
\label{eq:g-bar}
\end{equation}
We assume that the spaces ${\bf X}_h$ and $M_h$ are chosen such that
\begin{equation}
  \ker G^{T} = \cbr{\bf 1},
\label{eq:discrete-inf-sup}
\end{equation}
where $\cbr{\bf 1}$ represents the arbitrary constant in the
pressure. In this case $\Bar{S}$ is invertible (since $K$ is positive
definite) on the space complementary to $\cbr{\bf 1}$. Finite element
spaces that are stable for Stokes equations
satisfy~\eqref{eq:discrete-inf-sup}. Indeed, so-called inf-sup stable
approximation spaces for Stokes have this property uniformly in
$h$. We note that $k$ and $\zeta$ are strictly positive and bounded,
hence the blocks $k C$ and $\zeta^{-1}Q$ are nonzero. However, if $k$
becomes small as $\zeta$ becomes large the second and third rows of
$\mathcal{A}$ will approach linear dependence. This degeneracy is
a modelling shortcoming of the considered equations.

For the proofs in this section, the following lemma will be used:
\begin{lemma}
  \label{lem:MN}
  Let $M$ and $N$ be symmetric and positive definite matrices. If $M -
  N$ is positive definite, then $N^{-1} - M^{-1}$ is positive
  definite.
\end{lemma}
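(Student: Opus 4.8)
The plan is to reduce the statement to a spectral comparison via a congruence transformation by $N^{-1/2}$, which is available because $N$ is symmetric positive definite and therefore has a symmetric positive definite square root. First I would observe that $M - N \succ 0$ together with $N \succ 0$ gives $M \succ N$, and I would conjugate this inequality by $N^{-1/2}$ to obtain $N^{-1/2} M N^{-1/2} \succ I$. Since $N^{-1/2} M N^{-1/2}$ is symmetric positive definite, this says precisely that every eigenvalue of $N^{-1/2} M N^{-1/2}$ exceeds $1$.

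Next I would pass to inverses at the level of this transformed matrix: the eigenvalues of $\bigl(N^{-1/2} M N^{-1/2}\bigr)^{-1} = N^{1/2} M^{-1} N^{1/2}$ are the reciprocals of the previous ones, hence all lie strictly below $1$. Therefore $I - N^{1/2} M^{-1} N^{1/2} \succ 0$. Finally I would conjugate this last inequality back by $N^{-1/2}$: for any nonzero vector $x$, writing $y = N^{-1/2} x \neq 0$, we get
\begin{equation*}
  x^{T}\bigl(N^{-1} - M^{-1}\bigr) x
  = y^{T}\bigl(I - N^{1/2} M^{-1} N^{1/2}\bigr) y > 0,
\end{equation*}
so $N^{-1} - M^{-1}$ is positive definite (and it is symmetric, being a difference of symmetric matrices).

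I do not anticipate a serious obstacle here; the only point that needs care is the justification that a symmetric positive definite $N$ admits a symmetric positive definite square root $N^{1/2}$ with inverse $N^{-1/2}$, and that congruence by an invertible matrix preserves positive definiteness — both standard facts. An alternative route avoiding square roots is to use the identity $N^{-1} - M^{-1} = N^{-1}(M - N)M^{-1}$ and symmetrize, but the congruence argument above is the most transparent and self-contained, so that is the one I would present.
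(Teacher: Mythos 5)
Your proof is correct. Note, however, that the paper does not actually prove this lemma: it simply cites \citet[Corollary~7.7.4]{Horn:book}, so there is no in-paper argument to compare against. Your congruence argument via $N^{-1/2}$ is the standard self-contained proof of this fact (and indeed is essentially the proof one finds in Horn and Johnson), and every step is sound: $M-N\succ0$ gives $N^{-1/2}MN^{-1/2}\succ I$ by congruence, inverting a symmetric positive definite matrix reverses the Loewner order against the identity, and a final congruence by $N^{-1/2}$ recovers $N^{-1}-M^{-1}\succ0$. Your remark about the alternative identity $N^{-1}-M^{-1}=N^{-1}(M-N)M^{-1}$ is also correct but, as you note, that expression is not manifestly symmetric and requires an extra step (for instance writing $N^{-1}-M^{-1}=M^{-1}(M-N)N^{-1}(M-N)M^{-1}+M^{-1}(M-N)M^{-1}$ or a similar symmetrization), so the square-root congruence is the cleaner route.
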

\begin{proof}
  See \citet[Corollary 7.7.4]{Horn:book}.
\end{proof}

\subsection{Theoretical lower block triangular preconditioners}
\label{ss:blocktriangular_P}

We first consider lower block triangular preconditioners of the form
\begin{equation}
  \label{eq:LowPreconditioner}
  \mathcal{P} =
  \begin{bmatrix}
    \eta \mathcal{K} & 0 & 0 \\
    G & \mathcal{R} & 0 \\
    G & \mathcal{T} & \mathcal{S}
  \end{bmatrix}
\end{equation}
to precondition~\eqref{eq:blockMatrix_c}. Our objective is to find
expressions for $\mathcal{K}$, $\mathcal{R}$, $\mathcal{S}$ and
$\mathcal{T}$ such that the spectrum of the generalised eigenvalue
problem
\begin{equation}
  \label{eq:eigvalProb_low}
  \begin{bmatrix}
    \eta K & G^T & G^T \\
    G & -k C & 0 \\
    G & 0 & -\zeta^{-1}Q
  \end{bmatrix}
  \begin{bmatrix}
    u \\ p \\ p_c
  \end{bmatrix}
  = \Phi
  \begin{bmatrix}
    \eta \mathcal{K} & 0 & 0 \\
    G & \mathcal{R} & 0 \\
    G & \mathcal{T} & \mathcal{S}
  \end{bmatrix}
  \begin{bmatrix}
    u \\ p \\ p_c
  \end{bmatrix}
\end{equation}
is bounded independent of the mesh cell size $h$. In this case, the
iteration count for a Krylov method applied to the preconditioned
system
\begin{equation}
  \label{eq:preconsys}
  \begin{bmatrix}
    \eta \mathcal{K} & 0 & 0 \\
    G & \mathcal{R} & 0 \\
    G & \mathcal{T} & \mathcal{S}
  \end{bmatrix}^{-1}
  \begin{bmatrix}
    \eta K & G^T & G^T \\
    G & -k C & 0 \\
    G & 0 & -\zeta^{-1}Q
  \end{bmatrix}
  \begin{bmatrix}
    u \\ p \\ p_c
  \end{bmatrix} =
  \begin{bmatrix}
    \eta \mathcal{K} & 0 & 0 \\
    G & \mathcal{R} & 0 \\
    G & \mathcal{T} & \mathcal{S}
  \end{bmatrix}^{-1}
  \begin{bmatrix}
    f \\ g \\ 0
  \end{bmatrix}
\end{equation}
is expected to be optimal in terms of problem size.

The following theorem gives the conditions under which the problem
in~\eqref{eq:eigvalProb_low} admits only two distinct eigenvalues.
\begin{theorem}
  \label{thm:exactPrecon_low}
  Let the matrices $K$, $G$, $C$ and $Q$ and positive constants
  $\eta$, $\zeta$ and $k$ be those given
  in~\eqref{eq:blockMatrix_c}. In~\eqref{eq:LowPreconditioner}, if
  $\mathcal{K} = K$ and
  \begin{equation}
    \label{eq:RST_low-a}
    \mathcal{R}
    = -\frac{1}{\sigma\eta}GK^{-1}G^T + \frac{1}{\sigma\eta^2}GK^{-1}G^T
    \left(\frac{1}{\eta}GK^{-1}G^T+\frac{1}{\zeta}Q\right)^{-1}GK^{-1}G^T
    - \frac{k}{\sigma}C,
  \end{equation}
  \begin{equation}
    \label{eq:RST_low-b}
    \mathcal{S} = -\left(\frac{1}{\eta}GK^{-1}G^T+\zeta^{-1}Q\right),
  \end{equation}
  and
  \begin{equation}
    \label{eq:RST_low-c}
    \mathcal{T} = -\frac{1}{\eta}GK^{-1}G^T,
  \end{equation}
  where $\sigma$ is a parameter such that $\sigma < 0$ or
  $\sigma\in(0,1)$, then $\mathcal{R}$ is invertible and the
  generalised eigenvalue problem \eqref{eq:eigvalProb_low} has only
  two distinct eigenvalues, $\Phi_{1} = 1$ and $\Phi_{2} =
  \sigma$. Furthermore, the eigenvectors corresponding to the
  eigenvalue $\Phi_{1}=1$ have the form $\left[u^T \ 0 \ 0 \right]^T$.
\end{theorem}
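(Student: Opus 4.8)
The plan is to attack the generalised eigenvalue problem \eqref{eq:eigvalProb_low} directly by using the block triangular structure of $\mathcal{P}$ to solve for the eigenvalues row by row. Write the three scalar equations coming from \eqref{eq:eigvalProb_low}:
\begin{subequations}
\begin{align}
  \eta K u + G^T p + G^T p_c &= \Phi\, \eta K u, \\
  G u - kC\, p &= \Phi\left(G u + \mathcal{R}\, p\right), \\
  G u - \zeta^{-1} Q\, p_c &= \Phi\left(G u + \mathcal{T}\, p + \mathcal{S}\, p_c\right).
\end{align}
\end{subequations}
First I would dispose of the case $\Phi = 1$: the first equation forces $G^T(p + p_c) = 0$, so $p + p_c \in \ker G^T = \{\mathbf 1\}$; substituting $\Phi=1$ into the second and third equations and subtracting, together with the definitions of $\mathcal{R}$, $\mathcal{S}$, $\mathcal{T}$, should pin down $p$ and $p_c$ to be constants (hence zero in the quotient space $N^{n_p}$), leaving the eigenvector in the claimed form $[u^T\ 0\ 0]^T$ with $u$ arbitrary. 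This is the easy half.

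For $\Phi \neq 1$, the first equation gives $G^T(p+p_c) = (\Phi-1)\eta K u$, so $u = \frac{1}{(\Phi-1)\eta} K^{-1} G^T(p+p_c)$; I would substitute this into the remaining two equations to eliminate $u$ entirely. Using the shorthand $\bar S = GK^{-1}G^T$, the second and third equations become, after multiplying through,
\begin{align*}
  \tfrac{1}{(\Phi-1)\eta}\bar S(p+p_c) - kC\,p &= \Phi\left(\tfrac{1}{(\Phi-1)\eta}\bar S(p+p_c) + \mathcal{R}\,p\right), \\
  \tfrac{1}{(\Phi-1)\eta}\bar S(p+p_c) - \zeta^{-1}Q\,p_c &= \Phi\left(\tfrac{1}{(\Phi-1)\eta}\bar S(p+p_c) + \mathcal{T}\,p + \mathcal{S}\,p_c\right).
\end{align*}
The third of these should, with $\mathcal{S} = -(\eta^{-1}\bar S + \zeta^{-1}Q)$ and $\mathcal{T} = -\eta^{-1}\bar S$, collapse to a relation expressing $\bar S p_c$ (or $p_c$) in terms of $p$ and $\Phi$; the idea is that the $\bar S(p+p_c)$ terms on the two sides partly cancel, leaving $p_c$ proportional to $p$. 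Feeding that back into the second equation and collecting terms, the factor $\mathcal{R}$ in \eqref{eq:RST_low-a} has been reverse-engineered precisely so that the resulting scalar-operator identity is satisfied identically when $\Phi = \sigma$, and only then (given $\Phi\neq 1$). I would verify that the coefficient in front of $p$ vanishes exactly at $\Phi=\sigma$ by plugging $\Phi=\sigma$ into the reduced equation and checking it reduces to $0=0$ using the definition of $\mathcal{R}$; conversely, that for $\Phi\notin\{1,\sigma\}$ the only solution is $p=p_c=0$, whence $u=0$, so no other eigenvalue occurs.

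Separately I need invertibility of $\mathcal{R}$. Here I would use Lemma~\ref{lem:MN}: note $\frac{1}{\eta}\bar S + \frac{1}{\zeta}Q \succ \frac{1}{\eta}\bar S$ (as $\frac1\zeta Q$ is SPD on the relevant subspace), so by the lemma $\left(\frac{1}{\eta}\bar S\right)^{-1} - \left(\frac{1}{\eta}\bar S + \frac1\zeta Q\right)^{-1} \succ 0$ on the complement of $\{\mathbf 1\}$; rearranging, $\frac{1}{\eta}\bar S - \frac{1}{\eta}\bar S\left(\frac{1}{\eta}\bar S + \frac1\zeta Q\right)^{-1}\frac{1}{\eta}\bar S \succ 0$. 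Multiplying by $\eta$ shows the first two terms of $-\sigma\eta\,\mathcal{R}$ form an SPD matrix, and adding the SPD term $\eta k C$ keeps it SPD; then the sign condition on $\sigma$ ($\sigma<0$ or $\sigma\in(0,1)$, so $-\sigma\eta$ has a definite sign relative to making $\mathcal R$ definite) yields that $\mathcal{R}$ is definite, hence invertible, on $N^{n_p}$. The main obstacle I anticipate is bookkeeping: carefully tracking the quotient by $\{\mathbf 1\}$ throughout (so that $\bar S$, $Q$, $C$ are treated as invertible on the right subspace) and getting the algebra in the $\Phi\neq1$ elimination to collapse cleanly onto $\Phi=\sigma$ — the definition \eqref{eq:RST_low-a} is exactly the Schur-complement-type expression that makes this work, but confirming the cancellation without sign errors is where the real care is needed.
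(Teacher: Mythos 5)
Your plan follows essentially the same route as the paper: treat $\Phi=1$ and $\Phi\neq1$ separately, use the first block row to eliminate $u$, observe that the choices of $\mathcal{S}$ and $\mathcal{T}$ collapse the third row to a relation expressing $p_c$ in terms of $p$, and feed this into the second row so that the definition of $\mathcal{R}$ forces $\Phi=\sigma$; the invertibility of $\mathcal{R}$ is handled exactly as in the paper via Lemma~\ref{lem:MN}. One detail in your $\Phi=1$ sketch is off, though. With $\mathcal{T}=-\eta^{-1}\bar{S}$ and $\mathcal{S}=-(\eta^{-1}\bar{S}+\zeta^{-1}Q)$, setting $\Phi=1$ in the third equation yields $\eta^{-1}\bar{S}(p+p_c)=0$, which is the same information already supplied by the first row (since $\bar{S}=GK^{-1}G^T$ and $K^{-1}$ is SPD); subtracting it from the second equation therefore gains nothing. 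What actually pins down $p$ is the second equation on its own, which reduces to $(\mathcal{R}+kC)p=0$, and the nontrivial step — which your sketch does not surface — is proving $\mathcal{R}+kC$ nonsingular. This is a \emph{second}, parallel but distinct, application of Lemma~\ref{lem:MN}: one shows the Schur-complement-type matrix $\mathcal{G}=\bar{S}-\bar{S}\bigl(\bar{S}+\tfrac{\eta}{\zeta}Q\bigr)^{-1}\bar{S}$ is positive definite, and then the sign of $\sigma$ together with the positive semi-definiteness of $C$ gives definiteness of $\mathcal{R}+kC$. Without that extra invertibility argument the $\Phi=1$ case is not closed, so this is the one spot where you would need to add a genuine step rather than just bookkeeping.
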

\begin{proof}
  First we prove that $\mathcal{R}$ is invertible. For this, note that
  \begin{equation}
    q^T(\eta^{-2}\bar{S})^{-1}(\eta^{-1}\bar{S}+\zeta^{-1}Q)\bar{S}^{-1}q
    > q^T(\eta^{-2}\bar{S})^{-1}\eta^{-1}\bar{S}\bar{S}^{-1}q
    = q^T(\eta^{-1}\bar{S})^{-1}q,
  \end{equation}
  since $Q$ is positive definite. By Lemma~\ref{lem:MN} we therefore find that
  \begin{equation}
    q^T\eta^{-1}\bar{S}q >
    q^T\eta^{-1}\bar{S}(\eta^{-1}\bar{S}+\zeta^{-1}Q))^{-1}\eta^{-1}\bar{S}q.
  \end{equation}
  Since $kC$ is positive semi-definite it is easily seen from
  \eqref{eq:RST_low-a} that if $\sigma>0$, then $\mathcal{R}$ is
  negative definite and if $\sigma<0$, then $\mathcal{R}$ is positive
  definite. Hence, $\mathcal{R}$ is invertible.

  We now continue by proving that \eqref{eq:eigvalProb_low} has only
  two distinct eigenvalues. Assuming $\Phi = 1$,
  \eqref{eq:eigvalProb_low} becomes
  \begin{subequations}
    \label{eq:lambda1_low}
    \begin{alignat}{1}
      \label{eq:lambda1_low_a}
      \eta K u + G^T p + G^T p_c &= \eta K u \\
      \label{eq:lambda1_low_b}
      G u - k C p &= G u + \mathcal{R} p\\
      \label{eq:lambda1_low_c}
      G u - \zeta^{-1}Q p_c &= G u + \mathcal{T} p + \mathcal{S} p_c.
    \end{alignat}
  \end{subequations}
  From \eqref{eq:lambda1_low_a} we find $G^T(p + p_c) = 0$, hence~$p =
  - p_c$, provided both pressures have the same constant average.
  From~\eqref{eq:lambda1_low_b} we find that $(\mathcal{R} + kC) p =
  0$.  We need to show that $\mathcal{R} + kC = 0$ is non-singular, in
  which case~$p = 0$.  From the definition of~$\mathcal{R}$
  in~\eqref{eq:RST_low-a}
  \begin{equation}
    \mathcal{R} + kC = -\frac{1}{\sigma\eta}\mathcal{G}
    + \left(1 - \frac{1}{\sigma}\right) k C,
  \end{equation}
  where
  \begin{equation}
    \mathcal{G} = GK^{-1}G^T - GK^{-1}G^T\left(
      GK^{-1}G^T+\frac{\eta}{\zeta}Q\right)^{-1}GK^{-1}G^T.
  \end{equation}
  We now show that $\mathcal{G}$ is positive definite.  Using the
  shorthand from~\eqref{eq:g-bar} and defining $\Bar{Q} = \eta
  \zeta^{-1} Q$, we need to show that
  \begin{equation}
    q^T\left(\Bar{S} - \Bar{S}^T\left(\Bar{S}
        + \Bar{Q}\right)^{-1}\Bar{S}\right) q > 0
    \quad \forall q \in \mathbb{R}^{n_p},
  \end{equation}
  or equivalently
  \begin{equation}
    \label{eq:posdefinitematrix}
    \tilde{q}^T \Bar{S}^{-1} \tilde{q} - \tilde{q}^T\left(\Bar{S} +
      \Bar{Q}\right)^{-1} \tilde{q} > 0
    \quad \forall \tilde{q} \in \mathbb{R}^{n_p},
  \end{equation}
  where $\tilde{q} = \Bar{S} q$.  By Lemma~\ref{lem:MN}, since
  $\Bar{S} + \Bar{Q} - \Bar{S} = \Bar{Q}$ is positive definite
  (because $\eta \zeta^{-1} > 0$ and $Q$ is positive definite), the
  inequality in~\eqref{eq:posdefinitematrix} holds, hence
  $\mathcal{G}$ is positive definite.  If $\sigma < 0$, $\mathcal{R} +
  kC$ is positive definite (since $C$ is positive semi-definite), and
  if $\sigma \in (0,1)$ then $\mathcal{R} + kC$ is negative
  definite. It then follows that $p = p_c = 0$, and that $\Phi = 1$ is
  an eigenvalue of~\eqref{eq:eigvalProb_low} with eigenvector
  $\left[u^T \ 0 \ 0 \right]^T$.

  Next we assume $\Phi \ne 1$. Expanding the generalised eigenvalue
  problem~\eqref{eq:eigvalProb_low},
  \begin{subequations}
    \label{eq:lambdane1_low}
    \begin{alignat}{1}
      \label{eq:lambdane1_low_a}
      \eta K u + G^T p + G^T p_c &= \Phi\eta K u
      \\
      \label{eq:lambdane1_low_b}
      G u - k C p &= \Phi G u + \Phi \mathcal{R} p
      \\
      \label{eq:lambdane1_low_c}
      G u - \zeta^{-1}Q p_c &= \Phi G u + \Phi \mathcal{T} p
      + \Phi \mathcal{S} p_c.
    \end{alignat}
  \end{subequations}
  From~\eqref{eq:lambdane1_low_a},
  \begin{equation}
    \label{eq:Guexp_low}
    Gu = \frac{1}{\eta(\Phi - 1)} G K^{-1} G^T (p + p_c).
  \end{equation}
  Substituting this expression into~\eqref{eq:lambdane1_low_c} and
  using the definitions of $\mathcal{S}$~\eqref{eq:RST_low-b} and
  $\mathcal{T}$~\eqref{eq:RST_low-c} we find
  \begin{equation}
    (\Phi - 1) \left(\frac{1}{\eta} G K^{-1} G^T p
    + \left(\frac{1}{\eta} G K^{-1} G^T + \zeta^{-1} Q \right)
      p_c\right) = 0.
  \end{equation}
  Since $\Phi \ne 1$, it follows that
  \begin{equation}
    \label{eq:pc_low}
    p_c = -\frac{1}{\eta}\left(\frac{1}{\eta} G K^{-1} G^T
      + \zeta^{-1} Q \right)^{-1} G K^{-1} G^T p.
  \end{equation}
  Using \eqref{eq:Guexp_low} and~\eqref{eq:pc_low}
  in~\eqref{eq:lambdane1_low_b}, we find that
  \begin{equation}
    \mathcal{R} p = \left(-\frac{1}{\Phi\eta} \Bar{S}
      + \frac{1}{\Phi\eta^2} \Bar{S}
      \left(\frac{1}{\eta} \Bar{S} + \frac{1}{\zeta}Q\right)^{-1} \Bar{S}
      - \frac{k}{\Phi} C \right) p.
  \end{equation}
  From the definition of $\mathcal{R}$ in~\eqref{eq:RST_low-a} we
  have~$\Phi = \sigma$.
\end{proof}

While the choices for $\mathcal{K}$ $\mathcal{R}$, $\mathcal{S}$
and~$\mathcal{T}$ in Theorem~\ref{thm:exactPrecon_low} lead to the
generalised eigenvalue problem in~\eqref{eq:eigvalProb_low} having
only two distinct eigenvalues, it does not constitute a
computationally useful preconditioner. Computing the inverse of $G
K^{-1} G^T$ is not feasible for non-trivial problems.
For this reason, we consider in the next section a related, practical
preconditioner for~\eqref{eq:blockMatrix_c} for large scale
computations.
\subsection{Practical lower block triangular preconditioners}
\label{ss:pracblocktriangular_P}

Guided by the preconditioner developed in the previous section, we
proceed to formulate and analyse related preconditioners that are
practical for large-scale simulations. Our objective is to bound the
eigenvalues of the preconditioned system independently of the cell
size and, if possible, independently of the model parameters.

\subsubsection{Construction}

To construct a computationally feasible preconditioner, we need to
find a suitable approximation for the inverse of $G K^{-1} G^T$. For
this we make use of the following lemma.
\begin{lemma}
  \label{lem:spectralEquiv}
  The matrix $G K^{-1} G^T$ is spectrally equivalent to~$Q$:
  \begin{equation}
    \label{eq:specequivSchurQ}
    c_g \le \frac{\langle GK^{-1} G^Tq, q\rangle}{\langle Qq, q\rangle}
    \le c^g
  \end{equation}
  where $c_g$ and $c^g$ are positive constants independent of~$h$.
\end{lemma}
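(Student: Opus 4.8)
The plan is to establish the spectral equivalence in two independent directions, using the fact that $Q$ is the mass matrix on $M_h$ and $GK^{-1}G^T = \bar S$ is the (pressure) Schur complement of the Stokes-type block. First I would recall that since $K$ is the stiffness matrix associated with the coercive form $a(\cdot,\cdot)$ (restricted to the complement of $\{\mathbf 1\}$, where $G^T$ is injective), for any $p \in M_h \setminus \{\mathbf 1\}$ one has the variational characterization
\begin{equation}
  \langle \bar S^{-1} p, p \rangle^{-1}
  = \langle GK^{-1}G^T p, p\rangle
  = \min_{\mathbf v \in \mathbf X_h}\frac{|b(p,\mathbf v)|^2}{\langle K\mathbf v,\mathbf v\rangle},
\end{equation}
or, more directly, that $\langle \bar S p, p\rangle = \sup_{\mathbf v} |b(p,\mathbf v)|^2/\langle K\mathbf v,\mathbf v\rangle$ via the identification of $\bar S$ with the Schur complement. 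The upper bound $\langle \bar S p, p\rangle \le c^g \langle Qp,p\rangle$ then follows from the continuity of $b$: since $|b(p,\mathbf v)| = |\int_\Omega p\,\nabla\cdot\mathbf v\,\dif x| \le \|p\|_{L^2}\|\nabla\cdot\mathbf v\|_{L^2} \le C\,\|p\|_{L^2}\,\langle K\mathbf v,\mathbf v\rangle^{1/2}$, where the last step uses coercivity of $a$ (equivalently a bound on $\|\nabla\cdot\mathbf v\|_{L^2}$ by the energy norm $\langle K\mathbf v,\mathbf v\rangle^{1/2}$ up to a constant independent of $h$). Dividing through and taking the supremum over $\mathbf v$ gives $\langle \bar S p,p\rangle \le C^2 \|p\|_{L^2}^2 = C^2\langle Qp,p\rangle$, so $c^g = C^2$.

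For the lower bound, I would invoke the discrete inf-sup (LBB) stability assumed for the pair $(\mathbf X_h, M_h)$: there is a constant $\beta > 0$, independent of $h$, such that
\begin{equation}
  \sup_{\mathbf v \in \mathbf X_h}\frac{|b(p,\mathbf v)|}{\|\mathbf v\|_{\mathbf H^1}}
  \ge \beta\,\|p\|_{L^2} \qquad \forall p \in M_h.
\end{equation}
Combining this with the equivalence of $\|\mathbf v\|_{\mathbf H^1}$ and the energy norm $\langle K\mathbf v,\mathbf v\rangle^{1/2}$ on $\mathbf X_h$ (using coercivity and continuity of $a$, again $h$-independent), I get $\langle \bar S p,p\rangle = \sup_{\mathbf v}|b(p,\mathbf v)|^2/\langle K\mathbf v,\mathbf v\rangle \ge c\,\beta^2\,\|p\|_{L^2}^2 = c_g\,\langle Qp,p\rangle$. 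Together the two estimates give \eqref{eq:specequivSchurQ} with $h$-independent constants; on the kernel $\{\mathbf 1\}$ both quadratic forms vanish consistently, so the statement is understood modulo constants (or on $M_h = H^1\cap L^2_0$ as set up in Section~\ref{s:weakform}).

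The only genuinely delicate point is the coercivity of $a(\cdot,\cdot)$ itself: because $a(\mathbf u,\mathbf u) = \int_\Omega \eta\,\mathbf D\mathbf u:\mathbf D\mathbf u\,\dif x - \tfrac{1}{3}\int_\Omega \eta\,(\nabla\cdot\mathbf u)^2\,\dif x$ carries a negative grad-div contribution, one must check that $a$ is still coercive on $\mathbf H^1_0$. This follows from the pointwise algebraic identity $\mathbf D\mathbf u:\mathbf D\mathbf u - \tfrac{1}{3}(\nabla\cdot\mathbf u)^2 = |\mathbf D\mathbf u - \tfrac{1}{3}(\nabla\cdot\mathbf u)\mathbb I|^2 + \tfrac{1}{3}(\nabla\cdot\mathbf u)^2 - \tfrac{1}{3}(\nabla\cdot\mathbf u)^2$... more carefully, writing $\mathbf D\mathbf u = \mathbf D^{\mathrm{dev}}\mathbf u + \tfrac1d(\nabla\cdot\mathbf u)\mathbb I$ one sees the deviatoric part dominates, and for $d\le 3$ the coefficient $\tfrac13$ keeps the form nonnegative and in fact coercive via Korn's inequality. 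I would state this as a short preliminary computation (or cite it from \citep{Rhebergen:2014}), since it underpins the invertibility of $K$ already used implicitly. Once coercivity and continuity of $a$ are in hand with $h$-independent constants, the rest of the argument is the standard Schur-complement / mass-matrix spectral equivalence and requires no further work.
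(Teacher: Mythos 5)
Your proposal is correct and follows essentially the same route as the paper's cited reference, \citet[Theorem~5.22]{Elman:book}: the variational characterization of $\langle GK^{-1}G^T p, p\rangle$ as a supremum of $|b(p,{\bf v})|^2/\langle K{\bf v},{\bf v}\rangle$ over ${\bf v}\in{\bf X}_h$, continuity of $b$ for the upper bound, and the discrete inf-sup condition for the lower bound. The one extra step you correctly supply is a check of coercivity of $a(\cdot,\cdot)$, which the bare citation glosses over even though $K$ here comes from the form with the $-\tfrac{1}{3}\eta(\nabla\cdot{\bf u})(\nabla\cdot{\bf v})$ contribution rather than the plain Stokes stiffness. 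A small caution on that part of your argument: the ``pointwise identity'' you first write, $|{\bf D}{\bf u}|^2 - \tfrac{1}{3}(\nabla\cdot{\bf u})^2 = |{\bf D}{\bf u} - \tfrac{1}{3}(\nabla\cdot{\bf u})\mathbb{I}|^2$, holds only when $d=3$; the clean general statement, which you then give, is ${\bf D}{\bf u}:{\bf D}{\bf u} - \tfrac{1}{3}(\nabla\cdot{\bf u})^2 = \bigl|{\bf D}{\bf u} - \tfrac{1}{d}(\nabla\cdot{\bf u})\mathbb{I}\bigr|^2 + \bigl(\tfrac{1}{d}-\tfrac{1}{3}\bigr)(\nabla\cdot{\bf u})^2$, so for $d=3$ coercivity of $a$ on ${\bf H}^1_0$ rests entirely on a deviatoric Korn inequality, whereas for $d\le 2$ a residual $(\nabla\cdot)$ term remains and ordinary Korn suffices. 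With that fixed, the proof stands and is the same Schur-complement/mass-matrix equivalence the paper delegates to Elman's book.
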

\begin{proof}
  See~\citet[Theorem~5.22]{Elman:book}.
\end{proof}

Lemma~\ref{lem:spectralEquiv} suggests that we may replace each
occurrence of $G K^{-1} G^T$ in the `theoretical' preconditioner by a
weighted pressure mass matrix $c_{i} Q$ in the expressions for
$\mathcal{R}$, $\mathcal{S}$ and $\mathcal{T}$ in
Theorem~\ref{thm:exactPrecon_low}, resulting in
\begin{equation}
  \label{eq:simpRST_lowbar-a}
  R = -\frac{1}{\sigma\eta}
  \left(c_1 - \frac{c_2 c_4}{c_3 + \eta \zeta^{-1}}\right)Q
    - \frac{k}{\sigma}C,
\end{equation}
\begin{equation}
  \label{eq:simpRST_lowbar-b}
  S = -\left(\frac{c_5}{\eta} + \zeta^{-1}\right) Q,
\end{equation}
and
\begin{equation}
  \label{eq:simpRST_lowbar-c}
  T = -\frac{c_6}{\eta} Q,
\end{equation}
respectively. Noting that $\eta \zeta^{-1}$ is positive, for an
admissible $\sigma$ (see Theorem~\ref{thm:exactPrecon_low}) we choose
to replace $R$ in~\eqref{eq:simpRST_lowbar-a} by a spectrally
equivalent operator
\begin{equation}
  \label{eq:simpRST_low-a}
  R = -\eta^{-1}Q - kC,
\end{equation}
Similarly, we choose to replace $S$ in~\eqref{eq:simpRST_lowbar-b} by
the spectrally equivalent
\begin{equation}
  \label{eq:simpRST_low-b}
  S = -\left((2\eta)^{-1} + \zeta^{-1}\right) Q
\end{equation}
(the above factor of two is based on computational experience).  We
choose to set $\mathcal{T} = T = 0$, which simplifies the
preconditioner.  We will show, in Section~\ref{sss:analysis}, that
this simplifies the analysis, without giving up bounds on the spectrum
of the preconditioned operator.

We now define a preconditioner for~\eqref{eq:blockMatrix_c} of the
form
\begin{equation}
  \label{eq:LowPreconditioner_T0}
  \mathcal{P}_t
  =
  \begin{bmatrix}
    \eta \mathcal{K} & 0 & 0 \\
    G & \mathcal{R} & 0 \\
    G & 0 & \mathcal{S}
  \end{bmatrix},
\end{equation}
in which the matrices $\mathcal{K}$, $\mathcal{R}$, and $\mathcal{S}$
satisfy
\begin{equation}
  \label{eq:specEquivInverse}
  c_k
  \le \frac{\langle Kq, q\rangle}{\langle \mathcal{K} q, q\rangle}
  \le c^k,
  \quad
  c_r
  \le
  \frac{\langle R q, q \rangle}{\langle \mathcal{R} q, q\rangle}
  \le c^r,
  \quad
  c_s
  \le
  \frac{\langle S q, q\rangle}{\langle \mathcal{S} q, q\rangle}
  \le c^s
\end{equation}
for $R$ in \eqref{eq:simpRST_low-a} and $S$
in~\eqref{eq:simpRST_low-b}, and where $c_{i}$ and $c^{i}$ in the
above are positive constants that are independent of $h$, $k$, $\eta$
and~$\zeta$.  In Section~\ref{s:numericalsimulations} we will consider
a preconditioner of the form in \eqref{eq:LowPreconditioner_T0} in
which $\mathcal{K} = K$, $\mathcal{R} = R$ and $\mathcal{S} = S$, with
the action of the inverse computed exactly via LU decomposition.  We
will denote this preconditioner by $\mathcal{P}^{{\rm LU}}_t$.  We
introduce~\eqref{eq:specEquivInverse} into the definition of the
preconditioner to permit a wider range of possible preconditioners
that can be computationally more efficient.  For example, to build an
efficient and scalable preconditioner, we consider in
Section~\ref{s:numericalsimulations} an approximation of inverses of
$K$, $R$ and $S$ by algebraic multigrid cycles, in which $\mathcal{K}
= K^{{\rm AMG}}$, $\mathcal{R} = R^{{\rm AMG}}$ and $\mathcal{S} =
S^{{\rm AMG}}$.  We will denote this preconditioner
by~$\mathcal{P}^{{\rm AMG}}_t$.  Multigrid approximations of $K$, $R$
and $S$ are spectrally equivalent approximations for the matrices in
question~\citep[Lemma 6.12]{Elman:book} and hence
satisfy~\eqref{eq:specEquivInverse}.

\subsubsection{Analysis}
\label{sss:analysis}

In proposing a practical preconditioner, we have thus far relied on
spectrally equivalent sub-matrices for guidance. We now prove that the
spectrum of the system of interest, preconditioned
by~\eqref{eq:LowPreconditioner_T0}, where $\mathcal{K}$, $\mathcal{R}$
and $\mathcal{S}$ satisfy~\eqref{eq:specEquivInverse}, can be bounded
independently of~$h$.  \citet{Klawonn:1998} proved eigenvalue bounds
for $2 \times 2$ block-triangular preconditioners for a class of
saddle point problems.  We follow a similar approach to
\citet{Klawonn:1998}, but generalised for $3 \times 3$
block-triangular preconditioners.

In the following we assume that $c_k > 1$
in~\eqref{eq:specEquivInverse}, and hence $K - \mathcal{K}$ is positive
definite. This is always possible by appropriate scaling even though
$\mathcal{K}=K$ would seem to be the simplest choice. We use this
assumption in the analysis; the choice $\mathcal{K}=K$ would lead to
significant degeneracy. However, no rescaling is necessary in the
numerical simulations in Section~\ref{s:numericalsimulations}. In
preparation for the analysis, we introduce some definitions.  Let
$\mathcal{A}$ be defined by~\eqref{eq:blockMatrix_c}
and~$\mathcal{P}_t$ by~\eqref{eq:LowPreconditioner_T0}, then
\begin{equation}
  \mathcal{P}_t^{-1}\mathcal{A}
  =
  \begin{bmatrix}
    \mathcal{K}^{-1}K & \eta^{-1} \mathcal{K}^{-1}G^T
    & \eta^{-1} \mathcal{K}^{-1}G^T \\
    -\mathcal{R}^{-1} G\mathcal{K}^{-1}(K-\mathcal{K}) &
    -\mathcal{R}^{-1} (\eta^{-1} \Tilde{S} + kC) &
    -\mathcal{R}^{-1}\eta^{-1} \Tilde{S} \\
    -\mathcal{S}^{-1}G\mathcal{K}^{-1}(K-\mathcal{K}) &
    -\mathcal{S}^{-1}\eta^{-1} \Tilde{S} &
    -\mathcal{S}^{-1}(\eta^{-1} \Tilde{S} + \zeta^{-1}Q)
  \end{bmatrix},
\end{equation}
where we have used the shorthand
\begin{equation}
  \Tilde{S} = G \mathcal{K}^{-1} G^{T}.
\end{equation}
Introducing
\begin{equation}
  \label{eq:compact_H}
  \mathcal{H} =
  \begin{bmatrix}
    \eta(K - \mathcal{K}) & 0 & 0 \\
    0 & -\mathcal{R} & 0 \\
    0 & 0 & -\mathcal{S}
  \end{bmatrix},
\end{equation}
we note that
\begin{equation}
  \label{eq:compact_HPA}
  \mathcal{H}\mathcal{P}_t^{-1}\mathcal{A} =
  \begin{bmatrix}
    \eta(K-\mathcal{K})\mathcal{K}^{-1}K & (K-\mathcal{K})\mathcal{K}^{-1}G^T &
    (K-\mathcal{K})\mathcal{K}^{-1}G^T
    \\
    G\mathcal{K}^{-1}(K-\mathcal{K}) &
    \eta^{-1}G\mathcal{K}^{-1}G^T + kC &
    \eta^{-1}G\mathcal{K}^{-1}G^T
    \\
    G\mathcal{K}^{-1}(K-\mathcal{K}) &
    \eta^{-1}G\mathcal{K}^{-1}G^T &
    \eta^{-1}G\mathcal{K}^{-1}G^T+\zeta^{-1}Q
  \end{bmatrix}.
\end{equation}
We also introduce
\begin{equation}
  \label{eq:compact_tildeH}
  \tilde{\mathcal{H}}
  =
  \begin{bmatrix}
    \eta K & 0 & 0
    \\
    0 & \eta^{-1} \Bar{S} +kC & 0
    \\
    0 & 0 & \eta^{-1} \Bar{S} + \zeta^{-1}Q
    - \eta^{-1} \Bar{S} (\eta^{-1} \Bar{S} +kC)^{-1}
    \eta^{-1} \Bar{S}
  \end{bmatrix}.
\end{equation}
We will consider bounds for $\mathcal{H} \mathcal{P}_{t}^{-1}
\mathcal{A}$ with respect to $\mathcal{H}$ (see also Lemma 3.4 of
\citet{Klawonn:1998}). To find these bounds, we first formulate some
intermediate results.  We use the notation~$A \le B$ to denote that $B
- A$~is symmetric positive semi-definite.
\begin{lemma}
  \label{lem:Dbounded_by_tildeH}
  Decomposing $\mathcal{H}\mathcal{P}_t^{-1}\mathcal{A}$ as
  \begin{equation}
    \label{eq:compact_hpatdt}
    \mathcal{H}\mathcal{P}_t^{-1}\mathcal{A}
    = \mathcal{L}\mathcal{D}\mathcal{L}^T,
  \end{equation}
  where
  \begin{equation}
    \mathcal{L}
    =
    \begin{bmatrix}
      I & 0 & 0
      \\
      \eta^{-1}GK^{-1} & I & 0
      \\
      \eta^{-1}GK^{-1} & \eta^{-1}GK^{-1}G^T(\eta^{-1}GK^{-1}G^T+kC)^{-1} & I
    \end{bmatrix}
  \end{equation}
  and
  \begin{equation}
    \mathcal{D}
    =
    \begin{bmatrix}
      \eta(K\mathcal{K}^{-1}K - K) & 0 & 0
      \\
      0 & \eta^{-1} \Bar{S} +kC & 0
      \\
      0 & 0 & \eta^{-1} \Bar{S} +\zeta^{-1}Q
      -  \eta^{-1} \Bar{S} (\eta^{-1} \Bar{S} +kC)^{-1} \eta^{-1} \Bar{S}
    \end{bmatrix},
  \end{equation}
  there exist positive constants $\hat{C}_0$, $\hat{C}_1$, independent
  of $h$, $k$, $\eta$ and $\zeta$ such that
  \begin{equation}
    \label{eq:compact_specequiv_H_tH}
    \hat{C}_0\tilde{\mathcal{H}} \le \mathcal{D}
    \le \hat{C}_1\tilde{\mathcal{H}}.
  \end{equation}
\end{lemma}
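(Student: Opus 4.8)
The plan is to establish the statement in two steps: first verify that the stated block factorization $\mathcal{H}\mathcal{P}_t^{-1}\mathcal{A} = \mathcal{L}\mathcal{D}\mathcal{L}^T$ indeed holds, and then prove the two-sided bound $\hat{C}_0\tilde{\mathcal{H}} \le \mathcal{D} \le \hat{C}_1\tilde{\mathcal{H}}$ by comparing the two block-diagonal matrices block by block. For the first step I would carry out the block product $\mathcal{L}\mathcal{D}\mathcal{L}^T$ and match it against the expression for $\mathcal{H}\mathcal{P}_t^{-1}\mathcal{A}$ in~\eqref{eq:compact_HPA}. The computation rests on the identity
\[
  K^{-1}\bigl(K\mathcal{K}^{-1}K - K\bigr)K^{-1} = \mathcal{K}^{-1} - K^{-1},
\]
which turns every contribution of the $(1,1)$ block of $\mathcal{D}$ into $\eta^{-1}\bigl(G\mathcal{K}^{-1}G^T - GK^{-1}G^T\bigr) = \eta^{-1}(\Tilde{S} - \Bar{S})$; this is precisely the correction needed to convert the $\Bar{S}$-terms carried by $\mathcal{D}$ and $\mathcal{L}$ into the $\Tilde{S}$-terms appearing in $\mathcal{H}\mathcal{P}_t^{-1}\mathcal{A}$. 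The remaining bookkeeping for the $(2,2)$, $(2,3)$, $(3,2)$ and $(3,3)$ blocks then follows because the $(3,2)$ entry $\eta^{-1}GK^{-1}G^T(\eta^{-1}GK^{-1}G^T + kC)^{-1}$ of $\mathcal{L}$ and the Schur complement appearing in $\mathcal{D}_{33}$ are designed exactly so that those products telescope once the $(1,1)$ correction above is accounted for; only symmetry of $K$, $\mathcal{K}$, $C$ and $Q$ is used. Since the lemma essentially presents the decomposition as given, I would record the identity above and leave this elementary block multiplication implicit.

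For the second step I would use the fact that $\mathcal{D}$ and $\tilde{\mathcal{H}}$ are both block diagonal and, crucially, agree \emph{exactly} in their $(2,2)$ and $(3,3)$ blocks (namely $\eta^{-1}\Bar{S} + kC$ and $\eta^{-1}\Bar{S} + \zeta^{-1}Q - \eta^{-1}\Bar{S}(\eta^{-1}\Bar{S} + kC)^{-1}\eta^{-1}\Bar{S}$). Hence the only nontrivial comparison is between $\mathcal{D}_{11} = \eta(K\mathcal{K}^{-1}K - K)$ and $\tilde{\mathcal{H}}_{11} = \eta K$. From the spectral-equivalence hypothesis~\eqref{eq:specEquivInverse} we have $c_k\mathcal{K} \le K \le c^k\mathcal{K}$; a short argument with the Loewner ordering (for instance conjugating by $\mathcal{K}^{-1/2}$, or using that inversion is order-reversing, cf.\ Lemma~\ref{lem:MN}) then gives $c_k K \le K\mathcal{K}^{-1}K \le c^k K$, and subtracting $K$ yields
\[
  (c_k - 1)\,K \le K\mathcal{K}^{-1}K - K \le (c^k - 1)\,K.
\]
Because the analysis assumes $c_k > 1$, the lower constant $c_k - 1$ is strictly positive — this is exactly where the rescaling making $K - \mathcal{K}$ positive definite is needed, and why the choice $\mathcal{K} = K$ is excluded. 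Choosing $\hat{C}_0 = \min\{c_k - 1,\,1\}$ and $\hat{C}_1 = \max\{c^k - 1,\,1\}$, both strictly positive and, like $c_k$ and $c^k$, independent of $h$, $k$, $\eta$ and~$\zeta$, one then obtains $\hat{C}_0\tilde{\mathcal{H}} \le \mathcal{D} \le \hat{C}_1\tilde{\mathcal{H}}$. One also checks that $\mathcal{D}_{33} \ge \zeta^{-1}Q$, since $\eta^{-1}\Bar{S}(\eta^{-1}\Bar{S}+kC)^{-1}\eta^{-1}\Bar{S} \le \eta^{-1}\Bar{S}$ by the same order-reversing property, so that $\mathcal{D}$ and $\tilde{\mathcal{H}}$ are positive definite on the space complementary to the constant pressure mode and the displayed inequalities are meaningful there.

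I do not expect a genuine analytic obstacle: the lemma is a bookkeeping result that repackages the three coupled blocks of $\mathcal{P}_t^{-1}\mathcal{A}$ into a congruence-diagonalised form in which the two pressure blocks are untouched and only the velocity block carries the approximation $\mathcal{K}$ of $K$. The one place that needs care is the verification of the factorisation — in particular tracking which factors carry $K^{-1}$ (inside $\mathcal{L}$) versus $\mathcal{K}^{-1}$ or a Schur complement (inside $\mathcal{D}$), and handling the block transposes in $\mathcal{L}^T$ correctly — after which the spectral equivalence collapses to the single scalar-type estimate for $K\mathcal{K}^{-1}K - K$ against $K$ displayed above.
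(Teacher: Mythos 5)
Your proposal is correct and follows essentially the same reasoning as the paper's (much terser) proof: the $(2,2)$ and $(3,3)$ blocks of $\mathcal{D}$ and $\tilde{\mathcal{H}}$ coincide, so the bound reduces to comparing $\eta(K\mathcal{K}^{-1}K - K)$ with $\eta K$, which the spectral equivalence~\eqref{eq:specEquivInverse} together with $c_k>1$ converts into $(c_k-1)K \le K\mathcal{K}^{-1}K - K \le (c^k-1)K$, giving the same constants $\hat{C}_0=\min\{c_k-1,1\}$ and $\hat{C}_1=\max\{c^k-1,1\}$ that the paper records. The only difference is that you also spell out the verification of the $\mathcal{L}\mathcal{D}\mathcal{L}^T$ factorization, which the paper presents as part of the lemma statement rather than something to be re-derived in the proof.
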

\begin{proof}
  From \eqref{eq:specEquivInverse} and~$c_k > 1$ it immediately
  follows that $\hat{C}_0 = \min\{(c_k-1), 1\}$ and $\hat{C}_1 =
  \max\{(c^k-1), 1\}$.
\end{proof}

\begin{lemma}
  \label{lem:L_tildeH_LT_bounds}
  Assume $\eta$ and $\zeta$ are positive bounded constants and defining
  \begin{equation}
    \label{eq:beta1def}
    \beta_1 = \frac{\eta\zeta^{-1}}{1 + \eta\zeta^{-1}},
  \end{equation}
  the eigenvalues of $\mathcal{L}\tilde{\mathcal{H}}\mathcal{L}^T$ are
  bounded by the extreme eigenvalues of $\tilde{\mathcal{H}}$:
  \begin{equation}
    \label{eq:compact_eigenvalues_bounded}
    \frac{1}{
      \max\left(4, \frac{6}{\beta_1\min\left(\frac{1}{c^g},1\right)}\right)}
    \tilde{\mathcal{H}} \le
    \mathcal{L}\tilde{\mathcal{H}}\mathcal{L}^T
    \le 5\max\left(1, \frac{1}{\beta_1\min\left(\frac{1}{c^g},
      1\right)}\right)\tilde{\mathcal{H}},
  \end{equation}
  where $c^g$ is given by Lemma~\ref{lem:spectralEquiv}.
\end{lemma}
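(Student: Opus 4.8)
The plan is to exploit that $\mathcal{L}$ is unit lower block triangular, hence invertible, so that $\mathcal{L}\tilde{\mathcal{H}}\mathcal{L}^{T}$ is congruent to $\tilde{\mathcal{H}}$. Writing $w=\mathcal{L}^{T}v$ we have $v^{T}\mathcal{L}\tilde{\mathcal{H}}\mathcal{L}^{T}v=w^{T}\tilde{\mathcal{H}}w$, so the upper bound in~\eqref{eq:compact_eigenvalues_bounded} is equivalent to $\|\tilde{\mathcal{H}}^{1/2}\mathcal{L}^{T}\tilde{\mathcal{H}}^{-1/2}\|^{2}\le 5\max(1,1/(\beta_{1}\min(1/c^{g},1)))$, and applying the same substitution with $\mathcal{L}^{-T}$ in place of $\mathcal{L}^{T}$ shows the lower bound is equivalent to $\|\tilde{\mathcal{H}}^{1/2}\mathcal{L}^{-T}\tilde{\mathcal{H}}^{-1/2}\|^{2}\le\max(4,6/(\beta_{1}\min(1/c^{g},1)))$. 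Since $\mathcal{L}^{T}$ and $\mathcal{L}^{-T}$ are each the identity plus a strictly upper block triangular matrix, both estimates reduce to bounding three cross-block operators in the inner products induced by the diagonal blocks of $\tilde{\mathcal{H}}$, which I abbreviate $H_{1}=\eta K$, $H_{2}=\eta^{-1}\Bar{S}+kC$ and $H_{3}=\eta^{-1}\Bar{S}+\zeta^{-1}Q-\eta^{-1}\Bar{S}(\eta^{-1}\Bar{S}+kC)^{-1}\eta^{-1}\Bar{S}$; I also write $P=\eta^{-1}\Bar{S}$ and $R=kC\ge 0$.

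First I would record the algebra. One has $(\eta^{-1}GK^{-1})H_{1}(\eta^{-1}GK^{-1})^{T}=P$; the $(3,2)$ block $E$ of $\mathcal{L}$ satisfies $EH_{2}E^{T}=P(P+R)^{-1}P=P-(P\mathbin{:}R)$, where $P\mathbin{:}R:=P-P(P+R)^{-1}P\ge 0$ is the parallel sum of $P$ and $R$; $H_{2}=P+R$ and $H_{3}=(P\mathbin{:}R)+\zeta^{-1}Q$, so $H_{3}\ge\zeta^{-1}Q$; and $I-E=R(P+R)^{-1}$. The inequalities I need are: (i) $P\le H_{2}$, immediate from $R\ge 0$; (ii) $EH_{2}E^{T}=P-(P\mathbin{:}R)\le P$; (iii) $P\le(c^{g}/\beta_{1})\,\zeta^{-1}Q\le(c^{g}/\beta_{1})\,H_{3}$, from $\Bar{S}\le c^{g}Q$ (Lemma~\ref{lem:spectralEquiv}), $H_{3}\ge\zeta^{-1}Q$, and $\eta^{-1}\zeta=\beta_{1}^{-1}-1<\beta_{1}^{-1}$; and, for the lower bound, (iv) $R(P+R)^{-1}P(P+R)^{-1}R\le P\mathbin{:}R\le H_{3}$. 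Using $c^{g}/\beta_{1}\le 1/(\beta_{1}\min(1/c^{g},1))$, these give: the $(1,2)$, $(1,3)$, $(2,3)$ cross-blocks of $\tilde{\mathcal{H}}^{1/2}\mathcal{L}^{T}\tilde{\mathcal{H}}^{-1/2}$ have norms bounded by $1$, $\sqrt{c^{g}/\beta_{1}}$, $\sqrt{c^{g}/\beta_{1}}$ (by the algebra above with (i) and (iii) for the first two and (ii), (iii) for the third); and, using $I-E=R(P+R)^{-1}$ to identify the $(1,3)$ block of $\mathcal{L}^{-T}$ as $-\eta^{-1}K^{-1}G^{T}(P+R)^{-1}R$ and then (iv) together with (i) and (iii), the cross-blocks of $\tilde{\mathcal{H}}^{1/2}\mathcal{L}^{-T}\tilde{\mathcal{H}}^{-1/2}$ have norms bounded by $1$, $1$, $\sqrt{c^{g}/\beta_{1}}$.

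With these cross-block bounds in hand I would prove the upper inequality by expanding $w^{T}\tilde{\mathcal{H}}w=|a+N_{12}b+N_{13}c|^{2}+|b+N_{23}c|^{2}+|c|^{2}$, where $a=H_{1}^{1/2}v_{1}$, $b=H_{2}^{1/2}v_{2}$, $c=H_{3}^{1/2}v_{3}$ and $N_{12}$, $N_{13}$, $N_{23}$ are the cross-blocks of $\tilde{\mathcal{H}}^{1/2}\mathcal{L}^{T}\tilde{\mathcal{H}}^{-1/2}$ above, and applying Young's inequality $|x+y|^{2}\le(1+t)|x|^{2}+(1+t^{-1})|y|^{2}$ a few times; collecting terms and treating the cases $c^{g}/\beta_{1}\le 1$ and $c^{g}/\beta_{1}\ge 1$ separately, with a clean choice of Young parameters in each case, yields the constant $5\max(1,1/(\beta_{1}\min(1/c^{g},1)))$. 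The lower inequality follows in the same way, expanding $|\tilde{\mathcal{H}}^{1/2}\mathcal{L}^{-T}y|^{2}$ with the cross-blocks of $\mathcal{L}^{-T}$ in place of those of $\mathcal{L}^{T}$ and applying the same Young estimates, which produces $\max(4,6/(\beta_{1}\min(1/c^{g},1)))$ after the case split.

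The main obstacle is estimate (iv): because $C$ is a stiffness-type matrix it is \emph{not} spectrally equivalent to the pressure mass matrix $Q$, so the $(1,3)$ cross-block of $\mathcal{L}^{-T}$, which carries a factor $R=kC$, cannot be controlled by a mesh-independent multiple of $H_{3}$ by naive bounds; the cancellation that rescues it is exactly the parallel-sum inequality $R(P+R)^{-1}P(P+R)^{-1}R\le P\mathbin{:}R$. I would prove this by conjugating with $(P+R)^{-1/2}$ and using that $T:=(P+R)^{-1/2}P(P+R)^{-1/2}$ satisfies $0\le T\le I$ and commutes with $I-T$, which reduces the claim to the functional-calculus inequality $(I-T)T(I-T)\le T(I-T)$, equivalently $T^{2}(I-T)\ge 0$. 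A secondary, purely bookkeeping, difficulty is pinning down the explicit constants $4$, $5$, $6$, which requires a case-dependent choice of the Young parameters rather than a single uniform one; the structure of the argument is independent of their precise values.
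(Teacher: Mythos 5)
Your proposal takes essentially the same route as the paper: both prove the upper bound by expanding $\mathcal{L}\tilde{\mathcal{H}}\mathcal{L}^T$ and bounding the cross terms with Cauchy--Schwarz and Young, and both obtain the lower bound by passing to $\mathcal{L}^{-1}\tilde{\mathcal{H}}\mathcal{L}^{-T}$ via the congruence $y=\mathcal{L}^Tx$, using $\tilde{\mathcal{H}}_{33}\ge\zeta^{-1}Q$ together with Lemma~\ref{lem:spectralEquiv} to introduce the factor $\beta_1\min(1/c^g,1)$. The one place you supply detail the paper elides (where it simply asserts ``it can be shown that'') is the lower-bound cross block carrying $kC(\eta^{-1}\bar{S}+kC)^{-1}$, for which you correctly pinpoint the parallel-sum inequality $R(P+R)^{-1}P(P+R)^{-1}R\le P-P(P+R)^{-1}P$ as the cancellation that avoids any (false) attempt to dominate $kC$ by a mass matrix.
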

\begin{proof}
  From
  \begin{equation}
    \mathcal{L}\tilde{\mathcal{H}}\mathcal{L}^T
    =
    \begin{bmatrix}
      \eta K & G^T & G^T \\
      G & 2\eta^{-1}\bar{S} + kC & 2\eta^{-1}\bar{S} \\
      G & 2\eta^{-1}\bar{S} & 2\eta^{-1}\bar{S} + \zeta^{-1}Q
    \end{bmatrix}
  \end{equation}
  we obtain
  \begin{equation}
    \label{eq:compact_xTHTx}
    \begin{split}
      x^T\mathcal{L}\tilde{\mathcal{H}}\mathcal{L}^Tx &=
      \begin{bmatrix}
        u \\ p \\ p_c
      \end{bmatrix}^T
      \mathcal{L}\tilde{\mathcal{H}}\mathcal{L}^T
      \begin{bmatrix}
        u \\ p \\ p_c
      \end{bmatrix}
      \\
      &\le \eta u^TKu + 2|p^TGu| + 2|p_c^TGu| + 4\eta^{-1}|p_c^T\bar{S}p|
      \\
      &\quad + p^T\left(2\eta^{-1}\bar{S}+kC\right)p
      + p_c^T\left(2\eta^{-1}\bar{S}+\zeta^{-1}Q\right)p_c.
    \end{split}
  \end{equation}
  Applying the Cauchy--Schwarz inequality and Young's inequality $ab
  \le a^2/2 + b^2/2$, we find
  \begin{equation}
    \label{eq:compact_ppcGup}
    \begin{split}
      |p^TGu| &\le \tfrac{1}{2}\left(p^T\left(\eta^{-1}\bar{S} + kC\right)p
      + u^T\eta K u\right),
      \\
      |p_c^TGu| &\le \tfrac{1}{2}\left(p_c^T\left(\eta^{-1}\bar{S}
      + \zeta^{-1}Q\right)p_c
    + u^T\eta K u\right),
      \\
      |p_c^T\bar{S}p| &\le \tfrac{1}{2}\left(p_c^T\bar{S}p_c
      + p^T\bar{S}p\right),
    \end{split}
  \end{equation}
  so that combining \eqref{eq:compact_xTHTx}
  and~\eqref{eq:compact_ppcGup} we obtain
  \begin{equation}
    \label{eq:compact_upperbound_tht_pre}
    \begin{split}
      x^T\mathcal{L}\tilde{\mathcal{H}}\mathcal{L}^Tx \le &
      3\eta u^TKu + 5 p^T\left(\eta^{-1}\bar{S} + kC\right)p \\
      &+ 5 p_c^T\left(\eta^{-1}\bar{S} + \zeta^{-1}Q\right)p_c.
    \end{split}
  \end{equation}
  From the definition of $\beta_1$ \eqref{eq:beta1def} and using
  Lemmas~\ref{lem:MN} and~\ref{lem:spectralEquiv}, we find
  \begin{equation}
    \label{eq:relating_H33}
    \begin{split}
      p_c^T\tilde{\mathcal{H}}_{33}p_c &=
      p_c^T\left(\eta^{-1}\bar{S} + \zeta^{-1}Q
        - \eta^{-1}\bar{S}(\eta^{-1} \bar{S} + kC)^{-1}\eta^{-1}\bar{S}\right)p_c \\
      &\ge p_c^T\zeta^{-1}Q p_c \ge
      p_c^T\beta_1\min\left(\frac{1}{c^g},1\right)\left(\eta^{-1}\bar{S}
      + \zeta^{-1}Q\right)p_c
    \end{split}
  \end{equation}
  so that
  \begin{equation}
    \label{eq:beta1pcterm}
    5p_c^T\left(\eta^{-1}\bar{S} + \zeta^{-1}Q\right)p_c \le
    \frac{5}{\beta_1\min\left(\frac{1}{c^g},1\right)}
    p_c^T\tilde{\mathcal{H}}_{33}p_c.
  \end{equation}
  Combining~\eqref{eq:compact_upperbound_tht_pre}
  and~\eqref{eq:beta1pcterm} we find the upper bound
  in~\eqref{eq:compact_eigenvalues_bounded}:
  \begin{equation}
    \label{eq:compact_upperbound_tht}
    \begin{split}
    x^T\mathcal{L}\tilde{\mathcal{H}}\mathcal{L}^Tx \le &
    3u^T\tilde{\mathcal{H}}_{11}u + 5p^T\tilde{\mathcal{H}}_{22}p
    +     \frac{5}{\beta_1\min\left(\frac{1}{c^g},1\right)}
    p_c^T\tilde{\mathcal{H}}_{33}p_c \\
    \le &
    5\max\left(1, \frac{1}{\beta_1\min\left(\frac{1}{c^g},1\right)}\right)
    x^T\tilde{\mathcal{H}}x.
    \end{split}
  \end{equation}

  To obtain the lower bound in~\eqref{eq:compact_eigenvalues_bounded}
  we follow \citet{Klawonn:1998} and consider
  \begin{equation}
    \label{eq:Rayleigh-quotient}
    \frac{x^T\mathcal{L} \tilde{\mathcal{H}}
      \mathcal{L}^Tx}{x^T\tilde{\mathcal{H}}x}
    = \frac{y^T\tilde{\mathcal{H}}y}{y^T\mathcal{L}^{-1}
      \tilde{\mathcal{H}}\mathcal{L}^{-T}y},
  \end{equation}
  where $y := \mathcal{L}^Tx$ was used as substitution. Now,
  \begin{equation}
    \mathcal{L}^{-1}\tilde{\mathcal{H}}\mathcal{L}^{-T} =
    \begin{bmatrix}
      \eta K & -G^T & -G^T + \Lambda^T \\
      -G & 2\eta^{-1}\bar{S}+kC & \Xi^T \\
      -G + \Lambda & \Xi & \Upsilon
    \end{bmatrix}
  \end{equation}
  where
  \begin{equation}
    \begin{split}
      \Lambda &= \eta^{-1}\bar{S}(\eta^{-1}\bar{S}+kC)^{-1}G
      \\
      \Xi &= -\eta^{-1}\bar{S}(\eta^{-1}\bar{S}+kC)^{-1}\eta^{-1}\bar{S}
      \\
      \Upsilon &= 2\eta^{-1}\bar{S} + \zeta^{-1}Q
      -2\eta^{-1}\bar{S}\left(\eta^{-1}\bar{S}+kC\right)^{-1}\eta^{-1}\bar{S}
      \\
      & \quad + \eta^{-1}\bar{S}\left(\eta^{-1}\bar{S}+kC\right)^{-1}\eta^{-1}\bar{S}(\eta^{-1}\bar{S}+kC)^{-1}\eta^{-1}\bar{S}.
    \end{split}
  \end{equation}
  Similar to the case of the upper bound, using the Cauchy--Schwarz
  inequality, Young's inequality and Lemma~\ref{lem:MN} it can be
  shown that
  \begin{equation}
    \label{eq:compact_ytTHTy_f_pre}
    \begin{split}
      y^T\mathcal{L}^{-1}\tilde{\mathcal{H}}\mathcal{L}^{-T}y
      &=
      \begin{bmatrix}
        v \\ q \\ q_c
      \end{bmatrix}^T
      \mathcal{L}^{-1}\tilde{\mathcal{H}}\mathcal{L}^{-T}
      \begin{bmatrix}
        v \\ q \\ q_c
      \end{bmatrix}
\\
      &\le 4\eta v^TKv
      + 4q^T\left(\eta^{-1}\bar{S}+kC\right)q
      + 6q_c^T\left(\eta^{-1}\bar{S} + \zeta^{-1}Q\right)q_c.
    \end{split}
  \end{equation}
  Using \eqref{eq:relating_H33} we note that
  \begin{equation}
    \label{eq:beta1lower}
    6q_c^T\left(\eta^{-1}\bar{S} + \zeta^{-1}Q\right)q_c
    \le
    \frac{6}{\beta_1\min\left(\frac{1}{c^g},1\right)}
    q_c^T\tilde{\mathcal{H}}_{33}q_c.
  \end{equation}
  Combining \eqref{eq:compact_ytTHTy_f_pre} and~\eqref{eq:beta1lower}
  we obtain
  \begin{equation}
    \begin{split}
      y^T\mathcal{L}^{-1}\tilde{\mathcal{H}}\mathcal{L}^{-T}y
      \le & 4 v^T\tilde{\mathcal{H}}_{11}v
      + 4q^T\tilde{\mathcal{H}}_{22}q
      +\frac{6}{\beta_1\min\left(\frac{1}{c^g},1\right)}
      q_c^T\tilde{\mathcal{H}}_{33}q_c \\
      \le &
      \max\left(4, \frac{6}{\beta_1\min\left(\frac{1}{c^g},1\right)}\right)
      y^T\tilde{\mathcal{H}}y.
    \end{split}
  \end{equation}
  Using~\eqref{eq:Rayleigh-quotient}
  \begin{equation}
    \label{eq:compact_ytTHTy_f_inx}
    x^T\mathcal{L}\tilde{\mathcal{H}}\mathcal{L}^{T}x
    \ge
    \frac{1}{
    \max\left(4, \frac{6}{\beta_1\min\left(\frac{1}{c^g},1\right)}\right)}
    x^T\tilde{\mathcal{H}}x,
  \end{equation}
  which is the lower bound
  in~\eqref{eq:compact_eigenvalues_bounded}.
\end{proof}

\begin{lemma}
  \label{lem:HPAbounds_approx}
  Assume $\eta$ and $\zeta$ are positive bounded constants and let
  $\beta_1$ be given by \eqref{eq:beta1def}. There exist positive
  constants $\tilde{C}_0$, $\tilde{C}_1$, independent of $h$ such that
  \begin{equation}
    \tilde{C}_0 \tilde{\mathcal{H}}
    \le \mathcal{H}\mathcal{P}_t^{-1}\mathcal{A} \le
    \tilde{C}_1 \tilde{\mathcal{H}},
  \end{equation}
where
  \begin{equation}
    \tilde{C}_0 = \frac{\hat{C}_0}{
      \max\left(4, \frac{6}{\beta_1\min\left(\frac{1}{c^g},1\right)}\right)}
    ,\quad
    \tilde{C}_1 = 5\max\left(1, \frac{1}{\beta_1\min\left(\frac{1}{c^g},1\right)}\right)\hat{C}_1
  \end{equation}
  and $\hat{C}_0$ and $\hat{C}_1$ are the constants in
  Lemma~\ref{lem:Dbounded_by_tildeH}.
\end{lemma}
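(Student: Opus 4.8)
The plan is to combine the two preceding lemmas through the factorization $\mathcal{H}\mathcal{P}_t^{-1}\mathcal{A} = \mathcal{L}\mathcal{D}\mathcal{L}^T$ established in Lemma~\ref{lem:Dbounded_by_tildeH}. The key observation is that for any matrix $\mathcal{L}$ and symmetric matrices $A \le B$ one has $\mathcal{L} A \mathcal{L}^T \le \mathcal{L} B \mathcal{L}^T$, i.e.\ congruence preserves the semi-definite order (and this direction does not require $\mathcal{L}$ to be invertible). Applying the congruence $X \mapsto \mathcal{L}X\mathcal{L}^T$ to the bounds $\hat{C}_0\tilde{\mathcal{H}} \le \mathcal{D} \le \hat{C}_1\tilde{\mathcal{H}}$ of Lemma~\ref{lem:Dbounded_by_tildeH} gives $\hat{C}_0\, \mathcal{L}\tilde{\mathcal{H}}\mathcal{L}^T \le \mathcal{L}\mathcal{D}\mathcal{L}^T \le \hat{C}_1\, \mathcal{L}\tilde{\mathcal{H}}\mathcal{L}^T$, where $\tilde{\mathcal{H}}$ is the same matrix~\eqref{eq:compact_tildeH} appearing in Lemma~\ref{lem:L_tildeH_LT_bounds}.

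Next I would insert the two-sided estimate on $\mathcal{L}\tilde{\mathcal{H}}\mathcal{L}^T$ from Lemma~\ref{lem:L_tildeH_LT_bounds}. For the upper bound, chain
\[
  \mathcal{L}\mathcal{D}\mathcal{L}^T
  \le \hat{C}_1 \mathcal{L}\tilde{\mathcal{H}}\mathcal{L}^T
  \le 5\hat{C}_1\max\left(1, \tfrac{1}{\beta_1 \min(1/c^g,1)}\right)\tilde{\mathcal{H}}
  = \tilde{C}_1\tilde{\mathcal{H}}.
\]
For the lower bound, chain
\[
  \mathcal{L}\mathcal{D}\mathcal{L}^T
  \ge \hat{C}_0 \mathcal{L}\tilde{\mathcal{H}}\mathcal{L}^T
  \ge \frac{\hat{C}_0}{\max\left(4, \tfrac{6}{\beta_1\min(1/c^g,1)}\right)}\tilde{\mathcal{H}}
  = \tilde{C}_0\tilde{\mathcal{H}}.
\]
Since $\mathcal{L}\mathcal{D}\mathcal{L}^T = \mathcal{H}\mathcal{P}_t^{-1}\mathcal{A}$ by Lemma~\ref{lem:Dbounded_by_tildeH}, this yields the asserted inequalities with the stated constants.

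Finally I would verify the independence claim: $\hat{C}_0$ and $\hat{C}_1$ are independent of $h$, $k$, $\eta$ and $\zeta$ by Lemma~\ref{lem:Dbounded_by_tildeH}; the remaining factors depend only on $c^g$, which is $h$-independent by Lemma~\ref{lem:spectralEquiv}, and on $\beta_1 = \eta\zeta^{-1}/(1+\eta\zeta^{-1})$, which depends only on $\eta$ and $\zeta$. Hence $\tilde{C}_0$ and $\tilde{C}_1$ are independent of $h$, as claimed (though not of the model parameters, which is consistent with the statement).

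There is essentially no serious obstacle: the lemma is a bookkeeping step that packages Lemmas~\ref{lem:Dbounded_by_tildeH} and~\ref{lem:L_tildeH_LT_bounds}. The only points requiring care are that the congruence transformation only \emph{preserves} (not reverses) the order, so each one-sided chain must be assembled in the correct direction, and that one must recognize the $\tilde{\mathcal{H}}$ in the two lemmas as literally the same matrix so the bounds compose.
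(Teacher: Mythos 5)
Your proposal is correct and follows essentially the same route as the paper: apply the congruence $X \mapsto \mathcal{L}X\mathcal{L}^T$ to the two-sided bound on $\mathcal{D}$ from Lemma~\ref{lem:Dbounded_by_tildeH}, then chain with the two-sided bound on $\mathcal{L}\tilde{\mathcal{H}}\mathcal{L}^T$ from Lemma~\ref{lem:L_tildeH_LT_bounds} and identify $\mathcal{L}\mathcal{D}\mathcal{L}^T$ with $\mathcal{H}\mathcal{P}_t^{-1}\mathcal{A}$. The extra remarks about congruence preserving order and about the provenance of the $h$-independence are welcome clarifications but do not change the argument.
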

\begin{proof}
  Combine~\eqref{eq:compact_specequiv_H_tH},
  \eqref{eq:compact_eigenvalues_bounded} and~\eqref{eq:compact_hpatdt}
  to find
  \begin{equation}
    \frac{\hat{C}_0}{
      \max\left(4, \frac{6}{\beta_1\min\left(\frac{1}{c^g},1\right)}\right)}
    \tilde{\mathcal{H}} \le
    \hat{C}_0\mathcal{L}\tilde{\mathcal{H}}\mathcal{L}^T \le
    \mathcal{H}\mathcal{P}_t^{-1}\mathcal{A}
  \end{equation}
  and
  \begin{equation}
    \mathcal{H}\mathcal{P}_t^{-1}\mathcal{A}
    \le \hat{C}_1\mathcal{L}\tilde{H}\mathcal{L}^T
    \le 5\max\left(1, \frac{1}{\beta_1\min\left(\frac{1}{c^g},1\right)}\right)\hat{C}_1\tilde{\mathcal{H}},
  \end{equation}
  from which the Lemma follows with
  \begin{equation}
    \tilde{C}_0 = \frac{\hat{C}_0}{
      \max\left(4, \frac{6}{\beta_1\min\left(\frac{1}{c^g},1\right)}\right)},
    \qquad
    \tilde{C}_1 = 5\max\left(1, \frac{1}{\beta_1\min\left(\frac{1}{c^g},1\right)}\right)\hat{C}_1.
  \end{equation}
\end{proof}

Building on Lemma~\ref{lem:HPAbounds_approx} we now formulate bounds
in terms of~$\mathcal{H}$.
\begin{lemma}
  \label{lem:HPAbounds}
  There exist positive constants $C_0$ and $C_1$, independent of $h$,
  such that
  \begin{equation}
    C_0 \mathcal{H} \le \mathcal{H}\mathcal{P}_t^{-1}\mathcal{A} \le
    C_1 \mathcal{H}.
  \end{equation}
\end{lemma}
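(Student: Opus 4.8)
The plan is to derive the claim directly from Lemma~\ref{lem:HPAbounds_approx}, which already sandwiches $\mathcal{H}\mathcal{P}_t^{-1}\mathcal{A}$ between $\tilde{C}_0\tilde{\mathcal{H}}$ and $\tilde{C}_1\tilde{\mathcal{H}}$, by proving that $\mathcal{H}$ and $\tilde{\mathcal{H}}$ are themselves spectrally equivalent with constants depending only on the $h$-independent quantities appearing in~\eqref{eq:specEquivInverse} and Lemma~\ref{lem:spectralEquiv} and on the fixed parameters $\eta$, $\zeta$, $k$. Since $\mathcal{H}$ in~\eqref{eq:compact_H} and $\tilde{\mathcal{H}}$ in~\eqref{eq:compact_tildeH} share the same block-diagonal structure, it suffices to compare them block by block, i.e.\ to show $\eta(K-\mathcal{K})\simeq\eta K$, $-\mathcal{R}\simeq\eta^{-1}\bar S+kC$, and $-\mathcal{S}\simeq\tilde{\mathcal{H}}_{33}$, where $\bar S=GK^{-1}G^T$ as in~\eqref{eq:g-bar} and $A\simeq B$ abbreviates two-sided bounds $cA\le B\le CA$ with $h$-independent $c,C>0$.

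For the first block, the assumption $c_k>1$ in~\eqref{eq:specEquivInverse} gives $\tfrac{1}{c^k}K\le\mathcal{K}\le\tfrac{1}{c_k}K$ in the quadratic-form order, hence $(1-\tfrac{1}{c_k})K\le K-\mathcal{K}\le(1-\tfrac{1}{c^k})K$ with $0<1-\tfrac{1}{c_k}\le 1-\tfrac{1}{c^k}<1$; multiplying by $\eta$ gives $\eta(K-\mathcal{K})\simeq\eta K$. For the second block, \eqref{eq:specEquivInverse} gives $-\mathcal{R}\simeq-R=\eta^{-1}Q+kC$ (cf.~\eqref{eq:simpRST_low-a}), while Lemma~\ref{lem:spectralEquiv} gives $\eta^{-1}\bar S\simeq\eta^{-1}Q$; adding the common positive semi-definite matrix $kC$ preserves the equivalence (if $c_gA_2\le A_1\le c^gA_2$ then $\min(c_g,1)(A_2+B)\le A_1+B\le\max(c^g,1)(A_2+B)$ for any $B\ge0$), so $-\mathcal{R}\simeq\eta^{-1}\bar S+kC=\tilde{\mathcal{H}}_{22}$.

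The third block is the only step requiring care, because of the Schur-complement-like term in $\tilde{\mathcal{H}}_{33}=\eta^{-1}\bar S+\zeta^{-1}Q-\eta^{-1}\bar S(\eta^{-1}\bar S+kC)^{-1}\eta^{-1}\bar S$. On one side, $\eta^{-1}\bar S(\eta^{-1}\bar S+kC)^{-1}\eta^{-1}\bar S\ge0$, so $\tilde{\mathcal{H}}_{33}\le\eta^{-1}\bar S+\zeta^{-1}Q\le(\eta^{-1}c^g+\zeta^{-1})Q$ by Lemma~\ref{lem:spectralEquiv}. On the other side, $\eta^{-1}\bar S+kC\ge\eta^{-1}\bar S>0$ together with Lemma~\ref{lem:MN} gives $(\eta^{-1}\bar S+kC)^{-1}\le(\eta^{-1}\bar S)^{-1}$, whence $\eta^{-1}\bar S-\eta^{-1}\bar S(\eta^{-1}\bar S+kC)^{-1}\eta^{-1}\bar S\ge0$ and therefore $\tilde{\mathcal{H}}_{33}\ge\zeta^{-1}Q$; this is exactly the first inequality in~\eqref{eq:relating_H33}. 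Hence $\zeta^{-1}Q\le\tilde{\mathcal{H}}_{33}\le(\eta^{-1}c^g+\zeta^{-1})Q$, so $\tilde{\mathcal{H}}_{33}\simeq Q$, and since \eqref{eq:specEquivInverse} gives $-\mathcal{S}\simeq-S=((2\eta)^{-1}+\zeta^{-1})Q$ (cf.~\eqref{eq:simpRST_low-b}), which is a fixed positive multiple of $Q$, we obtain $-\mathcal{S}\simeq\tilde{\mathcal{H}}_{33}$.

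Collecting the three blockwise equivalences produces $h$-independent constants $d_0,d_1>0$ with $d_0\mathcal{H}\le\tilde{\mathcal{H}}\le d_1\mathcal{H}$; combined with Lemma~\ref{lem:HPAbounds_approx} this yields $\tilde{C}_0d_0\,\mathcal{H}\le\mathcal{H}\mathcal{P}_t^{-1}\mathcal{A}\le\tilde{C}_1d_1\,\mathcal{H}$, proving the lemma with $C_0=\tilde{C}_0d_0$ and $C_1=\tilde{C}_1d_1$. I expect the main obstacle to be the lower bound $\tilde{\mathcal{H}}_{33}\ge\zeta^{-1}Q$, which is what stops the subtracted Schur-complement term from causing degeneracy; the remaining steps are bookkeeping with spectral-equivalence constants. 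Throughout, all matrices are understood to act on the pressure space modulo the constant mode (cf.~\eqref{eq:discrete-inf-sup}), on which $\bar S$, $Q$ and the relevant combinations are positive definite, so all inverses above are well defined.
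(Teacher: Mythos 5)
Your proof follows exactly the paper's route: starting from Lemma~\ref{lem:HPAbounds_approx}, you establish the block-diagonal spectral equivalence $\mathcal{H}\simeq\tilde{\mathcal{H}}$ blockwise, with the first block handled via $c_k>1$ and \eqref{eq:specEquivInverse}, the second via Lemma~\ref{lem:spectralEquiv} plus the observation that adding a common PSD term preserves equivalence, and the third via the key lower bound $\tilde{\mathcal{H}}_{33}\ge\zeta^{-1}Q$ (the paper's \eqref{eq:relating_H33}) obtained from Lemma~\ref{lem:MN}. This is essentially identical to the paper's argument, differing only in that you route the third-block comparison through $Q$ as an explicit intermediate, whereas the paper writes the bounds directly in terms of $\mathcal{S}$.
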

\begin{proof}
  Building on Lemma~\ref{lem:HPAbounds_approx}, we need to show that
  $\mathcal{H}$~\eqref{eq:compact_H} is spectrally equivalent to
  $\tilde{\mathcal{H}}$~\eqref{eq:compact_tildeH}. We do so by showing
  spectral equivalence of the corresponding diagonal blocks in each
  matrix. It is clear that $\mathcal{H}_{11} = \eta(K - \mathcal{K})$
  is spectrally equivalent to $\tilde{\mathcal{H}}_{11} = \eta K$
  by~\eqref{eq:specEquivInverse}. Next, consider $\mathcal{H}_{22} =
  -\mathcal{R}$ and $\tilde{\mathcal{H}}_{22} = \eta^{-1}\bar{S} + kC$
  and find, using Lemma~\ref{lem:spectralEquiv}
  and~\eqref{eq:specEquivInverse},
  \begin{equation}
    \label{eq:compact_H2_upperlower}
    \begin{split}
      \tilde{\mathcal{H}}_{22} &= \eta^{-1}\bar{S}+kC
      \le -c_r\max(c^g,1)\mathcal{R}
      =  -c^{\mathcal{H}_2}\mathcal{R} \\
      \tilde{\mathcal{H}}_{22} &= \eta^{-1}\bar{S}+kC
      \ge -c^r\min(c_g,1)\mathcal{R}
      =  -c_{\mathcal{H}_{22}}\mathcal{R},
    \end{split}
  \end{equation}
  so that $-c_{\mathcal{H}_{22}}\mathcal{R} \le
  \tilde{\mathcal{H}}_{22} \le -c^{\mathcal{H}_{22}}\mathcal{R}$,
  meaning that $\tilde{\mathcal{H}}_{22}$ is spectrally equivalent to
  $\mathcal{H}_{22}$. Finally, consider $\mathcal{H}_{33} =
  -\mathcal{S}$ and $\tilde{\mathcal{H}}_{33} = \eta^{-1}\bar{S} +
  \zeta^{-1}Q - \eta^{-1}\bar{S}(\eta^{-1}\bar{S} +
  kC)^{-1}\eta^{-1}\bar{S}$. We note that, using
  Lemmas~\ref{lem:MN} and~\ref{lem:spectralEquiv}, and
  \eqref{eq:specEquivInverse},
  \begin{equation}
    \label{eq:compact_H3_upperlower}
    \begin{split}
      \tilde{\mathcal{H}}_{33} &= \eta^{-1}\bar{S}+\zeta^{-1}Q
      - \eta^{-1}\bar{S}(\eta^{-1}\bar{S} + kC)^{-1}\eta^{-1}\bar{S}
      \\
      \le & -2c_s\max(c^g,1)\mathcal{S} = -c^{\mathcal{H}_{33}}\mathcal{S}
      \\
      \tilde{\mathcal{H}}_{33} &= \eta^{-1}\bar{S}+\zeta^{-1}Q
      - \eta^{-1}\bar{S}(\eta^{-1}\bar{S} + kC)^{-1}\eta^{-1}\bar{S} \\
      \ge &\zeta^{-1}Q = \beta_2\left((2\eta)^{-1}+\zeta^{-1}\right)Q
      \ge  -c^s\beta_2\mathcal{S} = -c_{\mathcal{H}_{33}}\mathcal{S}
    \end{split}
  \end{equation}
  where
  \begin{equation}
    \beta_2 = \frac{\eta\zeta^{-1}}{\tfrac{1}{2} + \eta\zeta^{-1}}
  \end{equation}
  which is positive and bounded because $\eta$ and $\zeta$ are
  positive and bounded. From \eqref{eq:compact_H3_upperlower} it
  therefore follows that $-c_{\mathcal{H}_{33}}\mathcal{S} \le
  \tilde{\mathcal{H}}_{33} \le -c^{\mathcal{H}_{33}}\mathcal{S}$, hence
  $\tilde{\mathcal{H}}_{33}$ is spectrally equivalent to
  $\mathcal{S}$. Since $\tilde{\mathcal{H}}_{11}$,
  $\tilde{\mathcal{H}}_{22}$ and $\tilde{\mathcal{H}}_{33}$ are
  spectrally equivalent to $\eta(K-\mathcal{K})$, $\mathcal{R}$ and
  $\mathcal{S}$, respectively, $\mathcal{H}$ is spectrally equivalent
  to $\tilde{\mathcal{H}}$ and the Lemma follows.
\end{proof}

\begin{theorem}
  \label{thm:boundsspectrumPA}
  The spectrum of $\mathcal{P}_t^{-1}\mathcal{A}$ is bounded by the
  positive constants $C_0$ and $C_1$ from Lemma~\ref{lem:HPAbounds}:
  \begin{equation}
    \sigma\left(\mathcal{P}_t^{-1}\mathcal{A}\right)
    \subset \left[C_0, C_1\right].
  \end{equation}
\end{theorem}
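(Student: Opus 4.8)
The plan is to exploit the fact that, although $\mathcal{P}_t^{-1}\mathcal{A}$ is nonsymmetric, it is similar to a symmetric matrix after conjugation by a square root of $\mathcal{H}$, so that its spectrum is real and controlled by a Rayleigh quotient to which Lemma~\ref{lem:HPAbounds} applies. First I would record that $\mathcal{H}$ in~\eqref{eq:compact_H} is symmetric positive definite: under the standing hypothesis $c_k>1$ the block $\eta(K-\mathcal{K})$ is symmetric positive definite by~\eqref{eq:specEquivInverse}, while $-\mathcal{R}$ and $-\mathcal{S}$ are symmetric positive definite by the construction of $R$ and $S$ in~\eqref{eq:simpRST_low-a}--\eqref{eq:simpRST_low-b} together with~\eqref{eq:specEquivInverse} (recall $kC$ is positive semi-definite and $Q$ positive definite). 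Hence $\mathcal{H}$ is nonsingular and possesses a symmetric positive definite square root $\mathcal{H}^{1/2}$.

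Next I would verify from~\eqref{eq:compact_HPA} that the product $\mathcal{H}\mathcal{P}_t^{-1}\mathcal{A}$ is symmetric: each of its diagonal blocks ($\eta(K-\mathcal{K})\mathcal{K}^{-1}K$, $\eta^{-1}G\mathcal{K}^{-1}G^T+kC$ and $\eta^{-1}G\mathcal{K}^{-1}G^T+\zeta^{-1}Q$) is manifestly symmetric, and the six off-diagonal blocks pair up as mutual transposes because $K$, $\mathcal{K}$, $C$ and $Q$ are symmetric (e.g. $\big((K-\mathcal{K})\mathcal{K}^{-1}G^T\big)^T = G\mathcal{K}^{-1}(K-\mathcal{K})$). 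Therefore the matrix $\mathcal{M} := \mathcal{H}^{-1/2}\big(\mathcal{H}\mathcal{P}_t^{-1}\mathcal{A}\big)\mathcal{H}^{-1/2}$ is symmetric, and since $\mathcal{M} = \mathcal{H}^{1/2}\big(\mathcal{P}_t^{-1}\mathcal{A}\big)\mathcal{H}^{-1/2}$ it is similar to $\mathcal{P}_t^{-1}\mathcal{A}$. Consequently $\mathcal{P}_t^{-1}\mathcal{A}$ has real spectrum, coinciding with that of the symmetric matrix $\mathcal{M}$.

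Finally I would bound the Rayleigh quotient of $\mathcal{M}$. For $y \neq 0$ put $x = \mathcal{H}^{-1/2}y$, so that $y^T\mathcal{M}y = x^T\big(\mathcal{H}\mathcal{P}_t^{-1}\mathcal{A}\big)x$ and $y^Ty = x^T\mathcal{H}x$. Lemma~\ref{lem:HPAbounds} gives $C_0\, x^T\mathcal{H}x \le x^T\big(\mathcal{H}\mathcal{P}_t^{-1}\mathcal{A}\big)x \le C_1\, x^T\mathcal{H}x$, hence $C_0 \le y^T\mathcal{M}y / y^Ty \le C_1$ for every nonzero $y$. By the Courant--Fischer characterisation of eigenvalues of a symmetric matrix, $\sigma(\mathcal{M}) \subset [C_0, C_1]$, and since $\mathcal{M}$ and $\mathcal{P}_t^{-1}\mathcal{A}$ are similar, $\sigma(\mathcal{P}_t^{-1}\mathcal{A}) \subset [C_0, C_1]$, as claimed.

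The step requiring the most care is the passage to symmetry: one must confirm that $\mathcal{H}\mathcal{P}_t^{-1}\mathcal{A}$ really is symmetric and, equivalently, that the hypothesis $c_k>1$ is what keeps $\mathcal{H}$ nonsingular — without it the $(1,1)$ block of $\mathcal{H}$ degenerates and neither the square-root conjugation nor the equivalence between the generalised eigenproblem $\mathcal{H}\mathcal{P}_t^{-1}\mathcal{A}\,x = \mu\,\mathcal{H}x$ and the ordinary eigenproblem for $\mathcal{P}_t^{-1}\mathcal{A}$ is available. (As elsewhere in the paper, the matrices are understood to act on the subspace complementary to the constant pressure mode, where $\bar S$ and hence $\mathcal{H}$ are genuinely positive definite.) Everything beyond this is routine bookkeeping with Lemma~\ref{lem:HPAbounds}.
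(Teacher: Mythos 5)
Your proof is correct and takes essentially the same route as the paper's (which in turn cites \citet[Theorem~3.5]{Klawonn:1998}): bound the generalized Rayleigh quotient of $\mathcal{H}\mathcal{P}_t^{-1}\mathcal{A}$ against $\mathcal{H}$ via Lemma~\ref{lem:HPAbounds}, then use nonsingularity of $\mathcal{H}$ to identify the generalized eigenvalues with those of $\mathcal{P}_t^{-1}\mathcal{A}$. Your conjugation by $\mathcal{H}^{1/2}$ merely makes explicit the similarity and symmetry that the paper's terser wording leaves implicit.
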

\begin{proof}
  The proof can be found in~\citet[Theorem~3.5]{Klawonn:1998}, and is
  provided here for completeness. The constants $C_0$ and $C_1$ of
  Lemma~\ref{lem:HPAbounds} provide the lower and upper bounds for the
  eigenvalues of the generalized eigenvalue problem
  \begin{equation}
    \mathcal{H}\mathcal{P}_t^{-1}\mathcal{A} x = \lambda\mathcal{H}x.
  \end{equation}
  Since $\mathcal{H}$ is non-singular, the eigenvalues of the above
  problem are the same as the eigenvalues of
  \begin{equation}
    \mathcal{P}_t^{-1} \mathcal{A} y = \lambda y.
  \end{equation}
\end{proof}

We have thus far considered a lower block triangular preconditioner
$\mathcal{P}_t$~\eqref{eq:LowPreconditioner_T0}. An alternative would
be to consider an upper block triangular preconditioner
\begin{equation}
  \label{eq:compact_PU}
  \mathcal{P}_{tU}
  =
  \begin{bmatrix}
    \eta \mathcal{K} & G^T & G^T \\
    0 & \mathcal{R} & 0 \\
    0 & 0 & \mathcal{S}
  \end{bmatrix}.
\end{equation}
It was noted by \citet{Klawonn:1998} that since
\begin{equation}
  \mathcal{H} \mathcal{P}_t^{-1} \mathcal{A}
  = \mathcal{A}\mathcal{P}^{-1}_{tU}\mathcal{H},
\end{equation}
the results in this section also apply to the spectrum of~$\mathcal{A}
\mathcal{P}^{-1}_{tU}$.

\citet[Corollary 3.6]{Klawonn:1998} shows that $\mathcal{H}^{-1}$
defines an inner product on $\mathbb{R}^{n_u+2n_p}$ and that
$\mathcal{P}^{-1}\mathcal{A}$ is symmetric positive definite in this
inner product. This implies that one might employ the conjugate
gradient method in the $\mathcal{H}^{-1}$ inner product. Due to
Theorem~\ref{thm:boundsspectrumPA}, this then results in optimality in
$h$ of the preconditioned CG method. We, however, use the $\ell_2$
norm since using the $\mathcal{H}^{-1}$ inner product is not
computationally practical. A consequence of this choice is that we
will require Krylov methods for non-symmetric matrices. We consider
both GMRES and Bi-CGSTAB.

\citet[Theorem~3.7]{Klawonn:1998} provides bounds on the convergence
rate of the GMRES method using a norm equivalent to the
$\mathcal{H}^{-1}$-norm, while Theorem~3.8 of~\citep{Klawonn:1998}
provides bounds of the convergence rate of the GMRES method in the
$\ell_2$-norm. The convergence rate of the GMRES method in the
$\ell_2$-norm, unfortunately, depends on the condition number of the
matrix $\mathcal{H}^{1/2}$. This condition number may depend on $h$,
$\eta$, $\zeta$ and~$k$. Our numerical simulations in
Section~\ref{s:numericalsimulations}, however, do not show
problem-size dependence and show only slight dependence on the parameters
$\eta$, $\zeta$ and $k$. Problem size independence was also observed
in~\citep{Klawonn:1998}. In particular, the higher the
bulk-to-shear-viscosity ratio, the more iterations are required to
converge to a given tolerance. This is explained in
Lemma~\ref{lem:HPAbounds_approx} by noting that for small
$\eta\zeta^{-1}$, the constant $\beta_1$, defined by
\eqref{eq:beta1def}, tends to zero. This implies that $\tilde{C}_0$
and $\tilde{C}_1$ tend to, respectively, zero and infinity, leading to
an unbounded spectrum of $\mathcal{H}\mathcal{P}_t^{-1}\mathcal{A}$ in
the limit of large bulk-to-shear-viscosity ratio. The simulation
results seem less dependent on~$k$. Further discussion of the
parameters $k$, $\eta$ and $\zeta$ is deferred to
Section~\ref{s:numericalsimulations}.

\subsubsection{Variable parameter case}

Theorem~\ref{thm:boundsspectrumPA} states that the spectrum of
$\mathcal{P}_t^{-1}\mathcal{A}$, where $\mathcal{P}_t$ is the
preconditioner given by~\eqref{eq:LowPreconditioner_T0}, is bounded
above and below by constants independent of $h$. This indicates that
$\mathcal{P}_t$ will be a good preconditioner for the
system~\eqref{eq:blockMatrix_c}. To achieve this result we assumed
$k$, $\eta$ and $\zeta$ to be constant and that $\mathcal{K}$,
$\mathcal{R}$ and $\mathcal{S}$
satisfy~\eqref{eq:specEquivInverse}. For non-constant physical
parameters we propose to replace~$\eta K$ by~$K_{\eta}$ and set
\begin{equation}
  \label{eq:nonconstant_RS}
  R = -Q_{\eta} - C_k, \qquad
  S = -Q_{\eta}^{\zeta},
\end{equation}
which are such that they reduce to \eqref{eq:simpRST_low-a} and
\eqref{eq:simpRST_low-b}, respectively, when the physical parameters
are constant. Here $K_{\eta}$ and $C_k$ are the matrices defined in
Section~\ref{s:weakform} and $Q_{\eta}$ and $Q_{\eta}^{\zeta}$ are the
matrices obtained from the discretization of the bi-linear forms
$d_{\eta}(\cdot,\cdot)$ and $d_{\eta}^{\zeta}(\cdot,\cdot)$,
respectively, defined by
\begin{equation}
  \label{eq:detazeta}
  d_{\eta}(p,\omega)       = \int_{\Omega}\eta^{-1}p\omega \dif x, \qquad
  d_{\eta}^{\zeta}(p,\omega)
  = \int_{\Omega}\left((2\eta)^{-1}+\zeta^{-1}\right)p\omega \dif x.
\end{equation}

\section{Block-diagonal preconditioner}
\label{s:blockdiagonal_P}

The system in~\eqref{eq:blockMatrix_c} is symmetric and therefore the
use of MINRES would be allowed so long as the preconditioner is
symmetric and positive definite. The advantage of this over using
GMRES with the lower block triangular preconditioner of
Section~\ref{ss:blocktriangular_P} is that less memory is required;
the advantage over Bi-CGSTAB is that MINRES is guaranteed to
converge (subject to the usual floating-point caveats).  These
advantages motivate us to consider block-diagonal preconditioners
for~\eqref{eq:blockMatrix_c}.  In working toward a diagonal
preconditioner, ignoring the off-diagonal blocks
of~\eqref{eq:LowPreconditioner_T0} leads to a preconditioner of the
form
\begin{equation}
  \label{eq:diagPrecon_min}
  \mathcal{P}_{d}^{\star}
  =
  \begin{bmatrix}
    \eta \mathcal{K} & 0 & 0 \\
    0 & \mathcal{R} & 0 \\
    0 & 0 & \mathcal{S}
  \end{bmatrix},
\end{equation}
where $\mathcal{K}$, $\mathcal{R}$ and $\mathcal{S}$ are chosen such
that they satisfy~\eqref{eq:specEquivInverse}.  This preconditioner is
symmetric but not positive definite.  Multiplying
$\mathcal{P}_{d}^{\star}$ by the block diagonal matrix $\mathcal{J} =
{\rm bdiag}(I_u, -I_{p}, -I_{p})$, where $I_u \in \mathbb{R}^{n_u
  \times n_u}$ and $I_p \in \mathbb{R}^{n_p \times n_p}$ are identity
matrices, we obtain the symmetric positive definite preconditioner
\begin{equation}
  \label{eq:DPreconditioner}
  \mathcal{P}_d
  =
  \begin{bmatrix}
    \eta \mathcal{K} & 0 & 0 \\
    0 & -\mathcal{R} & 0 \\
    0 & 0 & -\mathcal{S}
  \end{bmatrix},
\end{equation}
which we propose for use with the MINRES method.

As in the previous section, for non-constant physical parameters
$\eta$, $\zeta$ and $k$, we propose to replace $\eta K$ by $K_{\eta}$
and we let $R$ and $S$ be given by~\eqref{eq:nonconstant_RS}.

By contrast to the block-triangular preconditioner, we have not been
able to prove boundedness of the spectrum for this block diagonal
preconditioner. Instead, in Section~\ref{s:numericalsimulations} we
investigate its properties by computation.

\section{Numerical simulations}
\label{s:numericalsimulations}

In this section we examine numerically the performance of the proposed
preconditioners. For both the block-triangular and block-diagonal
preconditioners we consider two approaches for the action of the
inverse of the matrices $\mathcal{K}$, $\mathcal{R}$ and
$\mathcal{S}$; LU decomposition (denoted by $\mathcal{P}^{LU}$) and
algebraic multigrid (AMG, denoted $\mathcal{P}^{{\rm AMG}}$).  We use
the LU decomposition as reference preconditioner to which the AMG
preconditioner can be compared. The LU-based preconditioner, however, is not
suitable for large-scale computations. We also remark that with the LU
preconditioner, the constants in \eqref{eq:specEquivInverse} are all
equal to unity. When using the AMG-based preconditioners, we use
smoothed aggregation algebraic multigrid for the $\mathcal{K}$ block
and classical algebraic multigrid for the $\mathcal{R}$ and
$\mathcal{S}$ blocks. We use a single multigrid V-cycle, and unless
otherwise stated, for smoothed aggregation we use four applications of
a Chebyshev smoother with one symmetric Gauss--Seidel iteration for
each Chebyshev application.  Although we use AMG to compute the action
of the inverse of the matrices $\mathcal{K}$, $\mathcal{R}$ and
$\mathcal{S}$, we note that any spectrally equivalent operator may be
used.

In all test cases we use $P^2$--$P^1$--$P^1$ continuous Lagrange
finite elements on simplices; this combination
satisfies~\eqref{eq:discrete-inf-sup} and will be inf--sup stable
since $\zeta^{-1} > 0$. More is needed when~$\zeta^{-1} = 0$. We
terminate the solver once a relative true residual of $10^{-8}$ is
reached.  In the case of GMRES, we use a restarted method with
restarts after $k$ iterations. We denote this by GMRES($k$).

All experiments have been performed using libraries from the FEniCS
Project~\citep{logg:2010,fenics:book} and the block preconditioning
support from PETSc~\citep{brown:2012}. For smoothed aggregation AMG,
we use the library ML~\citep{gee:2006} while classical algebraic
multigrid is used via the BoomerAMG library~\citep{henson:2002}. The
source code for reproducing all examples is freely available in the
supporting material~\citep{supporting_code}.

\subsection{Constant bulk and shear viscosity test case in two
  dimensions}
\label{ss:tc1}

In this test case we consider the simplified two-phase flow equations
in~\eqref{eq:mckenzie_old}. For the parameters
in~\eqref{eq:three-field}, we set $\eta = 1$, $\zeta = \alpha + 1/3$
and
\begin{multline}
  \label{eq:manu_k}
  k =
  \frac{k^{*} - k_{*}}{4\tanh(5)}
  \left(\tanh(10 x - 5)
    + \tanh(10z - 5) \right.
  \\
  \left.
    +  \frac{2(k^{*} - k_{*})
      - 2 \tanh(5)(k_{*}
      + k^{*})}{k_{*} - k^{*}}
    + 2\right),
\end{multline}
where $k_{*}$ and $k^{*}$ are parameters that control the maximum and
minimum values of $k$ is a domain. We ignore the buoyancy terms but
prescribe a source term $\mathbf{f}$ in~\eqref{eq:three-field_a}. The
Dirichlet boundary condition and the source term are constructed such
that the exact solution is
\begin{align}
  \label{eq:exact_ux}
  u_{x} &= k \partial_{x} p + \sin(\pi x)\sin(2\pi z) + 2,
  \\
  \label{eq:exact_uz}
  u_{z} &= k \partial_{z} p +  \frac{1}{2}\cos(\pi x)\cos(2\pi z) + 2.
  \\
  \label{eq:exact_p}
  p     &= -\cos(4 \pi x)\cos(2 \pi z),
\end{align}
We consider this test case on a structured triangular mesh of the unit
square $\Omega=[0, 1]^2$. This test case was studied
in~\citep{Rhebergen:2014}.

\subsubsection{Iteration counts for different preconditioners}

We first consider the case of $(k_*, k^*) = (0.5, 1.5)$ for the
diagonal and block triangular preconditioners.
Table~\ref{tab:tc1_its_alpha_minres} presents the iteration counts for
the two-field preconditioner from \citep{Rhebergen:2014} and the
three-field block diagonal preconditioner from this work, using
MINRES. In both cases, LU and AMG versions are considered. The results
in Table~\ref{tab:tc1_its_alpha_minres} show that the LU versions of
the preconditioners are optimal, with the three-field version showing
some sensitivity to the $\alpha$ parameter. Of greater interest is the
performance of the AMG-based preconditioners. In this case, the
iteration count of the three-field preconditioner is largely
insensitive to the problem size or the $\alpha$ parameter. For the
two-field AMG preconditioner, the iteration count has a strong
dependency on~$\alpha$. It is this observation
from~\citep{Rhebergen:2014} that motivated the present work.

\begin{table}
  \caption{Iteration counts for constant bulk and shear viscosity
    tests using the two- and three-field block diagonal preconditioners
    for different values of~$\alpha$. The
    number of cells in the mesh is~$2N$.}
\begin{center}
\begin{tabular}{c|cccccc}
 \multicolumn{7}{c}{$\mathcal{P}_{d2}^{LU}$/MINRES}\\
 \multicolumn{7}{c}{}\\
\hline
$N$ & $\alpha=-\tfrac{1}{3}$ & $\alpha=0$ & $\alpha=1$ & $\alpha=10$ & $\alpha=100$ & $\alpha=1000$ \\
\hline
 $32^2$& 8 & 8 & 8 & 7 & 7 & 5 \\
 $64^2$& 8 & 8 & 8 & 7 & 7 & 5 \\
$128^2$& 8 & 8 & 7 & 7 & 7 & 5 \\
$256^2$& 7 & 6 & 6 & 6 & 7 & 4 \\
\hline
 \multicolumn{7}{c}{}\\
 \multicolumn{7}{c}{$\mathcal{P}_{d3}^{LU}$/MINRES}\\
 \multicolumn{7}{c}{}\\
\hline
$N$ & $\alpha=-\tfrac{1}{3}$ & $\alpha=0$ & $\alpha=1$ & $\alpha=10$ & $\alpha=100$ & $\alpha=1000$ \\
\hline
 $32^2$& 8 & 15 & 22 & 33 & 39 & 39 \\
 $64^2$& 8 & 15 & 21 & 33 & 37 & 39 \\
$128^2$& 8 & 15 & 21 & 33 & 37 & 39 \\
$256^2$& 8 & 13 & 21 & 33 & 39 & 39 \\
\hline
 \multicolumn{7}{c}{}\\
 \multicolumn{7}{c}{$\mathcal{P}_{d2}^{AMG}$/MINRES}\\
 \multicolumn{7}{c}{}\\
\hline
$N$ & $\alpha=-\tfrac{1}{3}$ & $\alpha=0$ & $\alpha=1$ & $\alpha=10$ & $\alpha=100$ & $\alpha=1000$ \\
\hline
 $32^2$& 19 & 19 & 23 & 40 & 93  & 238 \\
 $64^2$& 23 & 25 & 29 & 48 & 115 & 289 \\
$128^2$& 26 & 29 & 33 & 57 & 133 & 338 \\
$256^2$& 30 & 32 & 36 & 66 & 155 & 388 \\
\hline
 \multicolumn{7}{c}{}\\
 \multicolumn{7}{c}{$\mathcal{P}_{d3}^{AMG}$/MINRES}\\
 \multicolumn{7}{c}{}\\
\hline
$N$ & $\alpha=-\tfrac{1}{3}$ & $\alpha=0$ & $\alpha=1$ & $\alpha=10$ & $\alpha=100$ & $\alpha=1000$ \\
\hline
 $32^2$& 29 & 25 & 35 & 60 & 73 & 82 \\
 $64^2$& 34 & 29 & 39 & 66 & 84 & 73 \\
$128^2$& 38 & 32 & 43 & 73 & 97 & 84 \\
$256^2$& 43 & 36 & 46 & 78 & 108& 95 \\
\hline
\end{tabular}
\label{tab:tc1_its_alpha_minres}
\end{center}
\end{table}

Tables~\ref{tab:tc1_its_alpha_bicgstab}
and~\ref{tab:tc1_its_alpha_gmres} present the number of iterations
required to converge for the block triangular preconditioner using
Bi-CGSTAB and GMRES(100), respectively. Compared to the three field
diagonal preconditioner, the triangular preconditioner requires fewer
iterations. For the AMG-based cases, the three-field preconditioner
requires two to four times fewer iterations than the two-field
preconditioner when $\alpha = 100$ and $\alpha = 1000$.  Noteworthy is
that the Bi-CGSTAB tests require fewer iterations than the GMRES(100)
tests. Overall, the three-field preconditioners show less sensitivity
to the parameter $\alpha$ than the two-field preconditioner
of~\citet{Rhebergen:2014} (see Table~\ref{tab:tc1_its_alpha_minres}).
\begin{table}
  \caption{Iteration counts for the constant bulk and shear viscosity
    tests using the three-field block triangular preconditioner and
    Bi-CGSTAB for different values of~$\alpha$. The number of cells
    in the mesh is~$2N$.}
\begin{center}
\begin{tabular}{c|cccccc}
 \multicolumn{7}{c}{$\mathcal{P}_t^{LU}$/Bi-CGSTAB}\\
 \multicolumn{7}{c}{}\\
\hline
$N$ & $\alpha=-\tfrac{1}{3}$ & $\alpha=0$ & $\alpha=1$ & $\alpha=10$ & $\alpha=100$ & $\alpha=1000$ \\
\hline
 $32^2$& 2 & 5 & 7 & 10 & 12 & 12 \\
 $64^2$& 2 & 4 & 7 & 11 & 13 & 13 \\
$128^2$& 2 & 4 & 7 & 11 & 13 & 13 \\
$256^2$& 2 & 4 & 7 & 11 & 13 & 14 \\
\hline
 \multicolumn{7}{c}{}\\
 \multicolumn{7}{c}{$\mathcal{P}_t^{AMG}$/Bi-CGSTAB}\\
 \multicolumn{7}{c}{}\\
\hline
$N$ & $\alpha=-\tfrac{1}{3}$ & $\alpha=0$ & $\alpha=1$ & $\alpha=10$ & $\alpha=100$ & $\alpha=1000$ \\
\hline
 $32^2$& 6  & 7 & 10 & 20 & 25 & 27 \\
 $64^2$& 8  & 8 & 12 & 22 & 28 & 34 \\
$128^2$& 10 & 9 & 12 & 23 & 36 & 41 \\
$256^2$& 11 & 10& 12 & 27 & 41 & 50 \\
\hline
\end{tabular}
\label{tab:tc1_its_alpha_bicgstab}
\end{center}
\end{table}

\begin{table}
  \caption{Iteration counts for the constant bulk and shear viscosity
    tests using the three-field block triangular preconditioner and
    GMRES(100) for different values of~$\alpha$. The number of cells in
    the mesh is~$2N$.}
\begin{center}
\begin{tabular}{c|cccccc}
 \multicolumn{7}{c}{$\mathcal{P}_t^{LU}$/GMRES(100)}\\
 \multicolumn{7}{c}{}\\
\hline
$N$ & $\alpha=-\tfrac{1}{3}$ & $\alpha=0$ & $\alpha=1$ & $\alpha=10$ & $\alpha=100$ & $\alpha=1000$ \\
\hline
 $32^2$& 4 & 8 & 12 & 19 & 21 & 21 \\
 $64^2$& 4 & 8 & 12 & 19 & 21 & 22 \\
$128^2$& 4 & 8 & 12 & 19 & 22 & 23 \\
$256^2$& 4 & 8 & 12 & 18 & 22 & 23 \\
\hline
 \multicolumn{7}{c}{}\\
 \multicolumn{7}{c}{$\mathcal{P}_t^{AMG}$/GMRES(100)}\\
 \multicolumn{7}{c}{}\\
\hline
$N$ & $\alpha=-\tfrac{1}{3}$ & $\alpha=0$ & $\alpha=1$ & $\alpha=10$ & $\alpha=100$ & $\alpha=1000$ \\
\hline
 $32^2$& 11 & 13 & 18 & 33 & 41 & 45 \\
 $64^2$& 13 & 14 & 21 & 39 & 50 & 54 \\
$128^2$& 15 & 16 & 24 & 42 & 58 & 48 \\
$256^2$& 18 & 18 & 27 & 48 & 66 & 58 \\
\hline
\end{tabular}
\label{tab:tc1_its_alpha_gmres}
\end{center}
\end{table}

\subsubsection{Observed convergence rates}

To understand the behaviour of the two- and three-field
preconditioners, it is helpful to examine the change in the
residual with iteration count. This is shown in
Figure~\ref{fig:tc1_alpha} for the AMG-based two- and three-field
block diagonal preconditioners with MINRES, and the block-triangular
preconditioner with Bi-CGSTAB and GMRES(100).  We observe that the
residual with the two-field preconditioner reduces rapidly to a
relative residual of approximately $10^{-4}$, at which point the
convergence slows. This behaviour is not observed with the three-field
preconditioners.

We note that dropping just three orders in magnitude in the residual
is a criterion for convergence that is commonly used. With a
relative tolerance of $10^{-3}$, the performance of the two field
preconditioner would appear to be very good and we would draw
substantially different conclusions on the relative merits of the two-
and three-field formulations.  However, we have performed tests that
show that a much tighter tolerance is required to maintain the
convergence rates to the exact solution with mesh refinement.  We
discuss this in Appendix~\ref{ap:femRates}.

\begin{figure}
  \centering
  \includegraphics[width=0.6\textwidth]{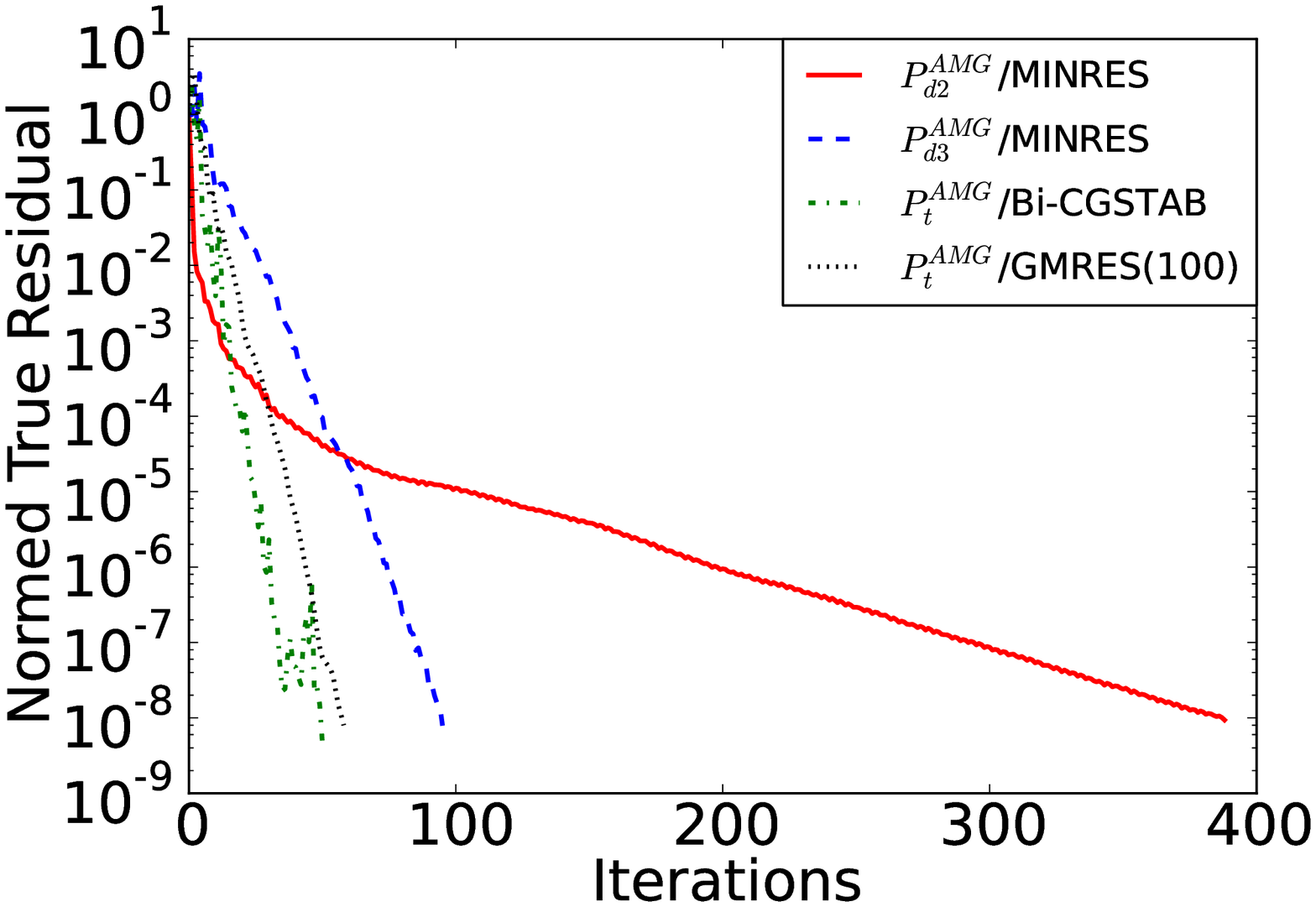}
  \caption{Residual decrease using preconditioned MINRES (with
    $\mathcal{P}_{d2}^{AMG}$ and $\mathcal{P}_{d3}^{AMG}$) and
    preconditioned Bi-CGSTAB and GMRES(100) (with
    $\mathcal{P}_t^{AMG}$) for the unit square test of
    Section~\ref{ss:tc1}. In all cases a mesh size of $N=256^2$ and
    $\alpha = 1000$ are used.}
  \label{fig:tc1_alpha}
\end{figure}

\subsection{Variable bulk and shear viscosity test case in two
  dimensions}
\label{ss:tc2}

In this test case we consider a manufactured solution for the prescribed
porosity field
\begin{equation*}
  \phi = \frac{1}{2}(\phi_*+\phi^*)
  + \frac{1}{2}(\phi^{*} - \phi_{*})\cos(4\pi(x\sin(\pi/6) + z\cos(\pi/6))),
\end{equation*}
where $\phi_{*}$ and $\phi^{*}$ are prescribed and $\phi_{*} \le \phi
\le \phi^{*}$. Let $\eta$, $\zeta$ and $k$ be given by
\begin{equation}
  \label{eq:constitutiveRel}
  k = \frac{R^2}{r_{\zeta} + 4/3}\left(\frac{\phi}{\phi_0}\right)^m, \quad
  \eta = 2\exp(-\lambda(\phi-\phi_0)), \quad
  \zeta = r_{\zeta}\left(\frac{\phi}{\phi_0}\right)^{-1},
\end{equation}
where $R=\delta/H$, with $\delta$ the reference compaction length
$\delta = \sqrt{(r_{\zeta} + 4/3) \eta_0 k_0 / \mu}$, $\mu$ is the
(constant) melt viscosity, $k_0$ is the characteristic permeability,
$\eta_0$ is the characteristic shear viscosity,
$r_{\zeta}=\zeta_0/\eta_0$ with $\zeta_0$ the characteristic bulk
viscosity and $H$ a length scale. Furthermore, $\phi_0$ is the
characteristic porosity and $m$ and $\lambda$ are constants. We choose
$m = 2$, $\lambda = 27$, $r_{\zeta} = 5/3$, $R=0.1$ and $\phi_0 =
0.05$. As before, we neglect buoyancy and add a source term
$\mathbf{f}$ to the right hand side of~\eqref{eq:three-field_a}. Again
the Dirichlet boundary condition and the source term are such that the
exact solution for the velocity $\mathbf{u}$ and the pressure $p$ are
given by~\eqref{eq:exact_ux}, \eqref{eq:exact_uz}
and~\eqref{eq:exact_p}. The approximate solution is computed on a
structured triangular mesh of the unit square, $\Omega=[0,1]^2$.  For
the tests in this section, we fix $\phi^{*} = 0.3$ and
vary~$\phi_{*}$.

To compare more fairly the three-field block diagonal preconditioner
$\mathcal{P}_{d3}$ with the two-field block diagonal preconditioner
$\mathcal{P}_{d2}$ introduced in \citep{Rhebergen:2014}, we slightly
modify the two-field block-preconditioner of~\citep{Rhebergen:2014} to
include a porosity dependence. This modification is described in
Appendix~\ref{ap:twofieldviscosityPreconditioner}.

In Table~\ref{tab:tc2_its_minres} we record the number of iterations
for the block diagonal preconditioners with MINRES for different
values of~$\phi_{*}$. For the AMG-based preconditioner, the iteration
count is lower for the three-field preconditioner. For low values of
$\phi_{*}$ we could not compute converged solutions with the two-field
preconditioner.
\begin{table}
  \caption{Iteration counts for the block diagonal preconditioners
    using MINRES for the variable viscosity test in two dimensions.
    The number of cells in the mesh is~$2N$. A dash indicates that we
    could not compute a converged solution to to breakdown of the
    solver.}
\begin{center}
\begin{tabular}{c|ccccccc}
\multicolumn{1}{c}{}& \multicolumn{3}{c}{$\mathcal{P}_{d2}^{LU}$/MINRES} & \multicolumn{3}{c}{$\mathcal{P}_{d2}^{AMG}$/MINRES}\\
 \cline{1-4} \cline{6-8}
$N$ & $\phi_*=10^{-3}$ & $\phi_*=10^{-5}$ & $\phi_*=0$ & & $\phi_*=10^{-3}$ & $\phi_*=10^{-5}$ & $\phi_*=0$ \\
 \cline{1-4} \cline{6-8}
 $32^2$& 76 & - & - & & 317 & - & - \\
 $64^2$& 71 & - & - & & 382 & - & - \\
$128^2$& 70 & - & - & & 428 & - & - \\
$256^2$& 67 & - & - & & 463 & - & - \\
 \cline{1-4} \cline{6-8}
 \multicolumn{8}{c}{}\\
\multicolumn{1}{c}{}& \multicolumn{3}{c}{$\mathcal{P}_{d3}^{LU}$/MINRES} & \multicolumn{3}{c}{$\mathcal{P}_{d3}^{AMG}$/MINRES}\\
 \cline{1-4} \cline{6-8}
$N$ & $\phi_*=10^{-3}$ & $\phi_*=10^{-5}$ & $\phi_*=0$ & & $\phi_*=10^{-3}$ & $\phi_*=10^{-5}$ & $\phi_*=0$ \\
 \cline{1-4} \cline{6-8}
 $32^2$& 217 & 218 & 219 & & 249 & 251 & 251 \\
 $64^2$& 223 & 226 & 226 & & 227 & 229 & 229 \\
$128^2$& 216 & 222 & 222 & & 227 & 243 & 243 \\
$256^2$& 213 & 244 & 247 & & 239 & 297 & 299 \\
 \cline{1-4} \cline{6-8}
\end{tabular}
\label{tab:tc2_its_minres}
\end{center}
\end{table}
Tables~\ref{tab:tc2_its_bicgstab} and~\ref{tab:tc2_its_gmres} record
the number of iterations for the block triangular preconditioner using
Bi-CGSTAB and GMRES(100), respectively. We observe minimal sensitivity
to~$\phi_{*}$ and, again, the iteration count for Bi-CGSTAB is
significantly lower than for GMRES(100). For both Bi-CGSTAB and
GMRES(100), the iteration counts with the block triangular
preconditioner are significantly less than for the two- and three-field
block-diagonal preconditioners with MINRES.
\begin{table}
  \caption{Iteration counts for the block-triangular preconditioner
    using Bi-CGSTAB for the variable viscosity test in two dimensions.
    The number of cells in the mesh is~$2N$.}
\begin{center}
\begin{tabular}{c|ccccccc}
\multicolumn{1}{c}{}& \multicolumn{3}{c}{$\mathcal{P}_{t}^{LU}$/Bi-CGSTAB} & \multicolumn{3}{c}{$\mathcal{P}_{t}^{AMG}$/Bi-CGSTAB}\\
 \cline{1-4} \cline{6-8}
$N$ & $\phi_*=10^{-3}$ & $\phi_*=10^{-5}$ & $\phi_*=0$ & & $\phi_*=10^{-3}$ & $\phi_*=10^{-5}$ & $\phi_*=0$ \\
 \cline{1-4} \cline{6-8}
 $32^2$& 72 & 75 & 68 & & 71 & 70 & 69 \\
 $64^2$& 63 & 69 & 65 & & 61 & 60 & 61 \\
$128^2$& 69 & 67 & 68 & & 51 & 52 & 51 \\
$256^2$& 69 & 61 & 54 & & 48 & 64 & 72 \\
 \cline{1-4} \cline{6-8}
\end{tabular}
\label{tab:tc2_its_bicgstab}
\end{center}
\end{table}

\begin{table}
  \caption{Iteration counts for the block-triangular preconditioner
    using GMRES(100) for the variable viscosity test in two dimensions.
    The number of cells in the mesh is~$2N$.}
\begin{center}
\begin{tabular}{c|ccccccc}
\multicolumn{1}{c}{}& \multicolumn{3}{c}{$\mathcal{P}_{t}^{LU}$/GMRES(100)} & \multicolumn{3}{c}{$\mathcal{P}_{t}^{AMG}$/GMRES(100)}\\
 \cline{1-4} \cline{6-8}
$N$ & $\phi_*=10^{-3}$ & $\phi_*=10^{-5}$ & $\phi_*=0$ & & $\phi_*=10^{-3}$ & $\phi_*=10^{-5}$ & $\phi_*=0$ \\
 \cline{1-4} \cline{6-8}
 $32^2$& 124 & 112 & 123 & & 121 & 115 & 115 \\
 $64^2$& 108 & 118 & 114 & & 92  & 94  & 94  \\
$128^2$& 113 & 104 & 98  & & 85  & 86  & 86  \\
$256^2$&  96 & 104 & 104 & & 81  & 102 & 102 \\
 \cline{1-4} \cline{6-8}
\end{tabular}
\label{tab:tc2_its_gmres}
\end{center}
\end{table}

\subsection{Constant bulk and shear viscosity test case in three
  dimensions}
\label{ss:tc3}

In this test case we consider a subduction zone setting with a
geometry described in Figure~\ref{fig:3Dsubduction} (this is the same
geometry used in~\citep{Rhebergen:2014}). Boundary sub-domains are
defined as $\Gamma_1 = \{{\bf x} \, | \, x + z = 1\}$, $\Gamma_2 =
\{{\bf x} \, | \, z = 1\}$, $\Gamma_3 = \partial \Omega
\backslash(\Gamma_{1} \cup \Gamma_{2})$. In this test case we solve
the simplified two-phase flow equations
in~\eqref{eq:mckenzie_old}. For the parameters
in~\eqref{eq:three-field}, we set $\eta = 1$, $\alpha = 1000$, $\zeta
= \alpha + 1/3$, $k = 0.9 (1 + \tanh(-2r))$, with $r = \sqrt{x^{2} +
  z^{2}}$, and~$\phi = 0.01$. This test case was studied also
in~\citep{Rhebergen:2014}.
\begin{figure}
  \centering
  \includegraphics[width=0.5\textwidth]{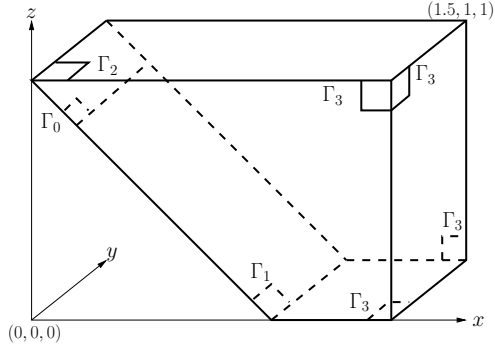}
  \caption{Description of a wedge in a three-dimensional subduction
    zone used for the test cases described in Sections~\ref{ss:tc3}
    and~\ref{ss:tc4}.}
  \label{fig:3Dsubduction}
\end{figure}

We apply the boundary conditions given by~\eqref{eq:bcs} with ${\bf g}
= (1/\sqrt{2}, 0.1, -1\sqrt{2})$ and ${\bf g} = (0, 0, 0)$ on,
respectively, $\Gamma_{1}$ and~$\Gamma_{2}$, and ${\bf g}_N=(0, 0, 0)$
on~$\Gamma_{3}$.

We compute the solution to this test case on three unstructured meshes
with an increasing number of degrees of freedom. We only consider the
AMG-based preconditioners. In Table~\ref{tab:tc3} we present the
number of iterations needed for convergence.  The reduced iteration
counts for the three-field formulation, relative to the two-field
formulation, is clear. This is especially so for the Bi-CGSTAB case.

\begin{table}
  \caption{Iteration counts for the block preconditioners for the
    three-dimensional wedge domain with constant viscosity.}
  \begin{center}
    {\footnotesize
    \begin{tabular}{c|cccc}
      \hline
      DOFs  & $\mathcal{P}_{d2}^{AMG}$/MINRES & $\mathcal{P}_{d3}^{AMG}$/MINRES
      & $\mathcal{P}_{t}^{AMG}$/Bi-CGSTAB & $\mathcal{P}_{t}^{AMG}$/GMRES(100) \\
      \hline
      419,486   & 694 & 366 & 123 & 193 \\
      1,905,881 & 772 & 280 & 71  & 115 \\
      8,493,971 & 766 & 258 & 74  & 144 \\
      \hline
    \end{tabular}
    }
    \label{tab:tc3}
  \end{center}
\end{table}

\subsection{Porosity dependent test case in three dimensions}
\label{ss:tc4}

In this test case we again consider the subduction zone-like domain in
Figure~\ref{fig:3Dsubduction}. We set $\Gamma_{1} = \{{\bf x} \, | \,
x + z = 1, x > 0.1\}$ and solve~\eqref{eq:three-field} with the
constitutive relations in~\eqref{eq:constitutiveRel}, although we
slightly modify the bulk viscosity. For the bulk viscosity we use
\begin{equation}
  \label{eq:mod_bulk_viscosity}
  \zeta^{-1}_{{\rm mod}}
  =
  \begin{cases}
    \zeta^{-1}   & \mbox{if}\ \zeta^{-1} > \zeta^{-1}_c
    \\
    \zeta^{-1}_c & \mbox{otherwise},
  \end{cases}
\end{equation}
with $\zeta^{-1}_{c}$ being a cut-off inverted bulk viscosity. We
choose $\zeta^{-1}_{c} = 10^{-4}$ which roughly corresponds to $\alpha
= 1000$ in~\eqref{eq:mckenzie_old}. Other constants are chosen as $m =
2$, $\lambda = 27$, $r_{\zeta} = 5/3$, $R = 0.1$ and $\phi_0 =
0.05$. A modified bulk viscosity~\eqref{eq:mod_bulk_viscosity} is
needed to prevent the system of equations in~\eqref{eq:three-field}
becoming under-determined; without the modified bulk viscosity
both~\eqref{eq:three-field_b} and \eqref{eq:three-field_c} reduce to
$\nabla \cdot{\bf u} = 0$ in the limit~$\phi \to 0$.

We prescribe the porosity field
\begin{equation}
  \phi =
  (\phi^*-\phi_*) \exp \left(-\frac{(x - x_c)^2 + (y - y_c)^2}{2\omega^2}\right)
  + \phi_*,
  \quad \omega = (\omega^*-\omega_*)\frac{z-1}{z_c - 1}
  + \omega_*,
\end{equation}
with $\phi^{*} = 0.2$, $\phi_{*} = 0$, $x_{c} = 0.3$, $y_{c} = 0.5$,
$z_{c} = 1 - x_{c}$, $\omega^{*} = 0.07$ and $\omega_{*} = 0.01$.  The
porosity field is visualised in Figure~\ref{fig:3Da}. We apply the
boundary conditions in~\eqref{eq:bcs} with ${\bf g} = (1/\sqrt{2},0.1,
-1\sqrt{2})$ on~$\Gamma_1$ and ${\bf g} = (0, 0, 0)$ on~$\Gamma_2$,
and ${\bf g}_N=(0, 0, 0)$ on~$\Gamma_{0}$ and $\Gamma_{3}$. Once the
solution $({\bf u}, p, p_c)$ to~\eqref{eq:three-field} has been
computed, the magma velocity may be recovered from
\begin{equation}
  {\bf u}_f
  = {\bf u}_s - \frac{k}{\phi}\del{\nabla p - {\bf e}_3}.
\end{equation}

We depict the computed magma velocity in Figure~\ref{fig:3Db}.

\begin{figure}
  \centering
  \begin{subfigure}[b]{0.45\linewidth}
    \includegraphics[width=\textwidth]{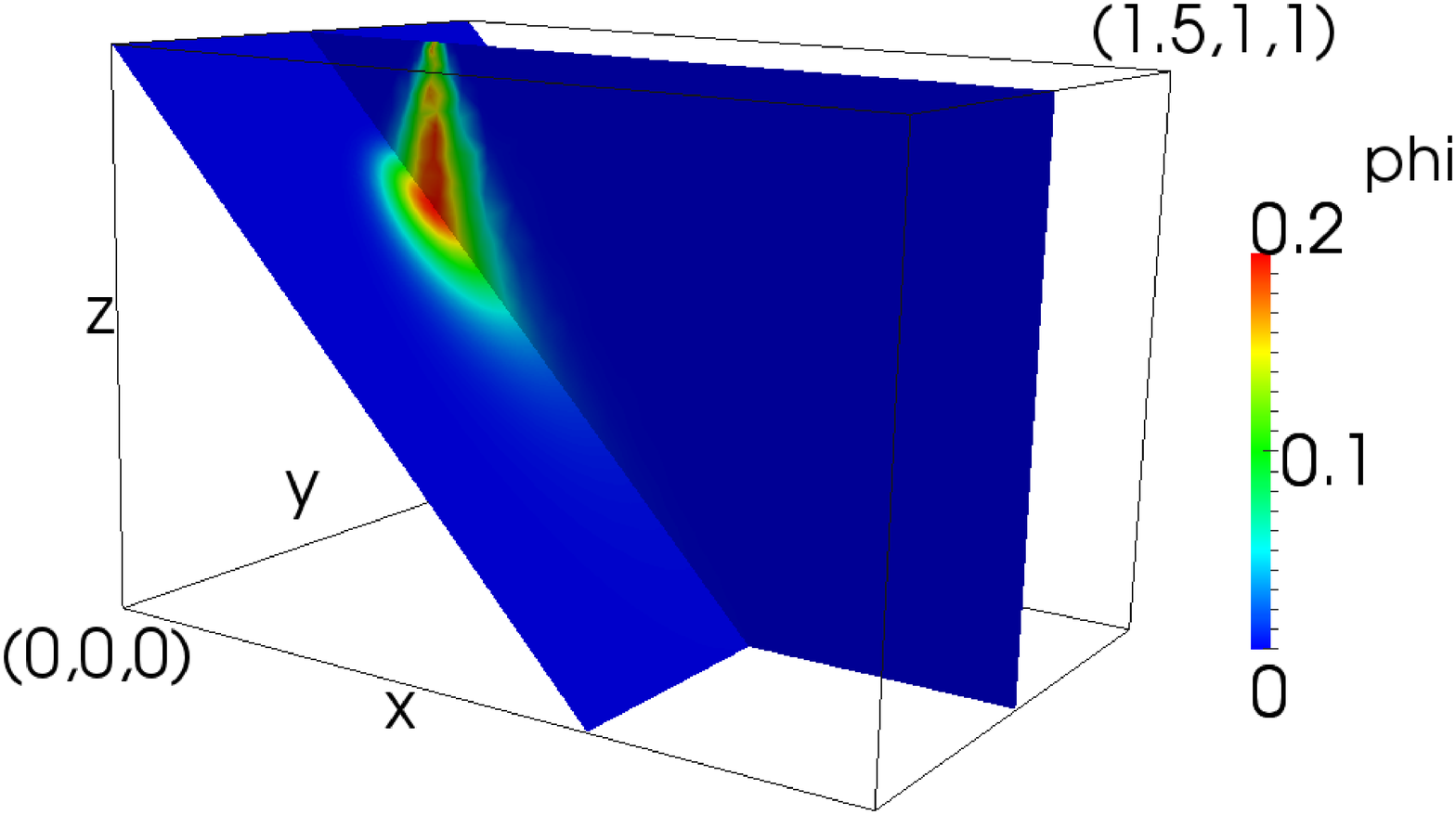}
    \caption{Porosity field.}
    \label{fig:3Da}
  \end{subfigure}
  \begin{subfigure}[b]{0.45\linewidth}
    \includegraphics[width=\textwidth]{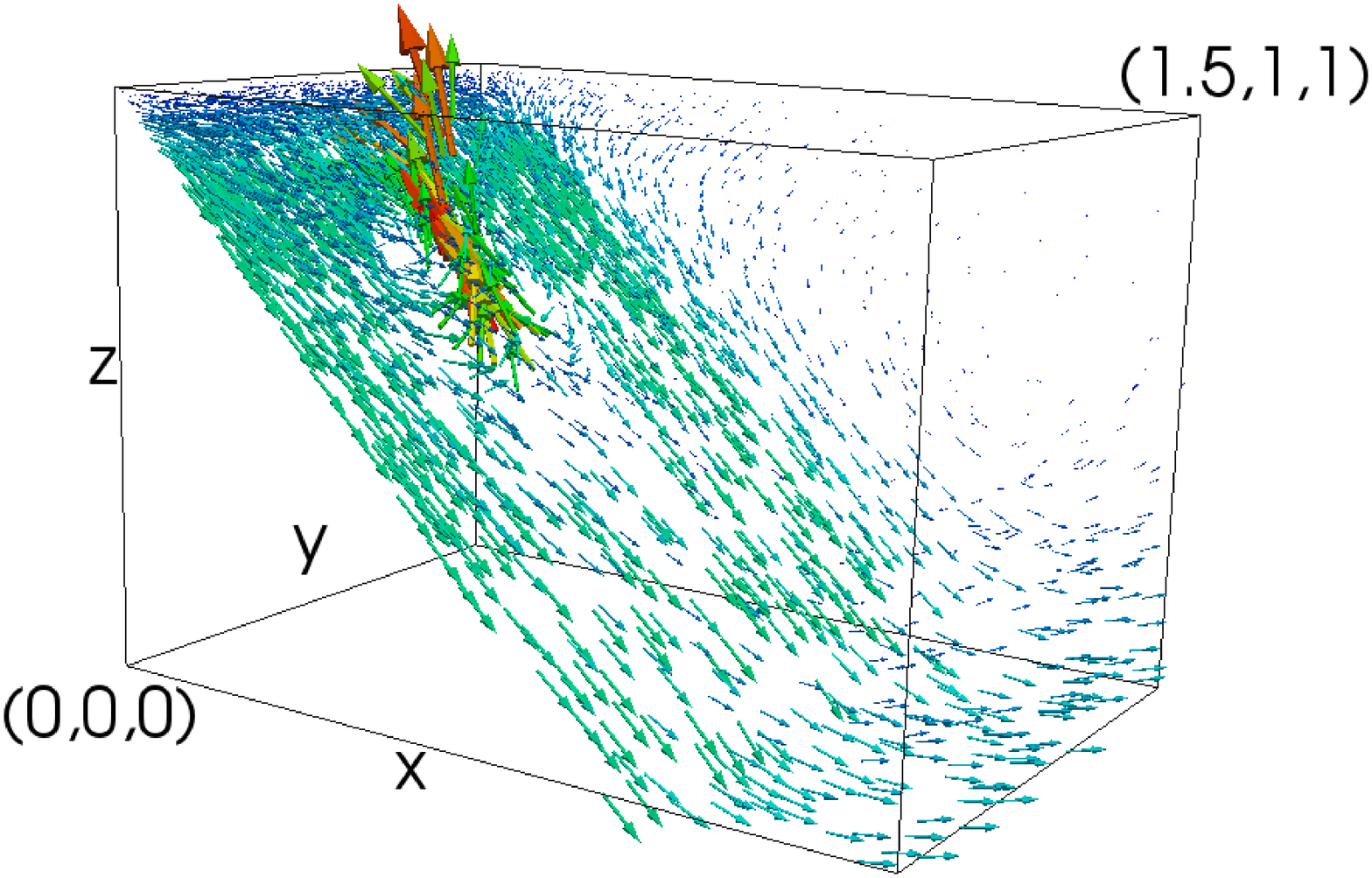}
    \caption{Fluid velocity field.}
    \label{fig:3Db}
  \end{subfigure}
  \caption{The given porosity field and computed fluid velocity in a
    three-dimensional subduction zone. Test case of
    Section~\ref{ss:tc4}.}
  \label{fig:3D}
\end{figure}

We compute the solution to this test case on three unstructured meshes
with differing numbers of degrees of freedom. It was not possible to
compute a converged solution for this test case with the two-field
preconditioner (due to the low porosity) and so we consider the
three-field preconditioners only. We consider only the AMG-based
preconditioners.

Table~\ref{tab:tc4} shows the number of iterations required to meet
the convergence tolerance. We observe no pathological growth in the
iteration count with mesh refinement; on the contrary, the iteration
count tends to drop with mesh refinement. We use unstructured meshes
for this test case, and speculate that the reduced iteration count for
finer meshes could be due to better mesh quality. Noteworthy, again,
is the good performance of Bi-CGSTAB.
\begin{table}
  \caption{Iteration counts for the variable viscosity wedge problem.}
  \begin{center}
    \begin{tabular}{c|ccc}
      \hline
      DOFs  & MINRES & Bi-CGSTAB & GMRES(300) \\
      \hline
      419,486   & 1607 & 320 & 918 \\
      1,905,881 & 1534 & 256 & 609 \\
      8,493,971 &  933 & 225 & 769 \\
      \hline
    \end{tabular}
    \label{tab:tc4}
  \end{center}
\end{table}

\subsection{Summary of numerical simulations}
\label{ss:sumnumsim}

By numerical simulations we have shown that for high
bulk-to-shear-viscosity ratios, the three-field preconditioner is more
robust and has superior performance than the two-field
preconditioner developed in \cite{Rhebergen:2014}. This parameter
regime, which corresponds to low values of porosity, is common in
coupled magma/mantle dynamics simulations. It is exactly in this regime,
where compaction stresses dominate over shear stresses, that the
two-field preconditioner breaks down. The main reason to use a
two-field preconditioner would be because the global system is smaller
than when using the three-field preconditioner. However, for practical
applications, the advantages of introducing the compaction pressure in
the three-field formulation as a new unknown certainly outweigh the
increase in size of the global system.

\section{Conclusions}
\label{s:conclusions}

We have proposed, analysed, and numerically tested new preconditioners
for a three-field formulation of the flow equations for coupled
magma/mantle dynamics. The system of equations can be formulated as a
two-field problem, but it was shown numerically in our past work that
a diagonal block preconditioner using algebraic multigrid was not
uniform with respect to a parameter that modulates compaction
stresses. This motivated the development of preconditioners for a
three-field version of the equations, in which an extra pressure
variable is introduced. Our analysis shows that for a lower block
triangular preconditioner, the eigenvalues of the preconditioned
operator are independent of the problem size, and have a mild
sensitivity to the model parameters. The latter issue is associated
with a degeneracy of the model as porosity approaches zero.  Numerical
experiments indicate that the iteration count for solution of the
three-field problem with the preconditioner developed here does not
grow with problem size and that the sensitivity to model parameters is
small. We therefore expect the preconditioners we have presented to be
effective for large-scale simulation of realistic subduction zones
with large variations in parameters.

\paragraph{Acknowledgements} S.R.~thanks T.~Keller, L.~Alisic and
J.~Rudge for helpful discussions. R.F.K.~thanks the Leverhulme Trust
for support. This work was supported by the Natural Environment
Research Council under grants NE/I026995/1 and NE/I023929/1.

\bibliographystyle{unsrtnat}
\bibliography{references}
\appendix
\section{Non-dimensionalization of the two-phase flow equations}
\label{ap:nondim-2-phase}

The two-phase flow equations that describe coupled magma/mantle
dynamics, as derived by \citet{McKenzie:1984},
are based on mass and momentum conservation. Mass conservation
for the solid (matrix) phase and fluid (melt) phase read
\begin{subequations}
  \label{eq:apmckenzie_mass}
  \begin{alignat}{1}
    \label{eq:apmckenzie_a}
    \partial_{t} \phi - \nabla \cdot \del{(1 - \phi)\mathbf{u}_s} &= 0,
    \\
    \label{eq:apmckenzie_b}
    \nabla\cdot\left({\bf u}_s + {\bf q}\right) &= 0,
  \end{alignat}
\end{subequations}
where $\phi$ is the porosity, $\mathbf{u}_{s}$ is the velocity of the
solid phase and ${\bf q} = \phi\left({\bf u}_f - {\bf u}_s\right)$,
where $\mathbf{u}_{f}$ is the velocity of the melt phase. We have
assumed that there is no melting/freezing and we have taken the
density of the solid and melt phases to be constant and
uniform. Momentum conservation of the melt phase (Darcy's law) reads
\begin{equation}
    \label{eq:apmckenzie_c}
    {\bf q} = -\frac{k}{\mu} \nabla \del{p_f + \rho_{f} gz},
\end{equation}
where $k$ is the permeability, $\mu$ is the fluid viscosity, $p_{f}$
is the pressure in the fluid phase, $\rho_{f}$ is the mass density of
the fluid and $g$ is the constant acceleration due to
gravity. Momentum balance for the two-phase mixture reads
\begin{equation}
  \label{eq:apmckenzie_d}
  -\nabla\cdot\left(2\eta{\bf D}{\bf u}_s\right)
  + \nabla p_f
  =
  \nabla\left(\left(\zeta -
        \frac{2}{3}\eta\right)\nabla\cdot{\bf u}_s\right)
    -\bar{\rho}g{\bf e}_{3},
\end{equation}
where $\eta$ is the shear viscosity of the solid, ${\bf D}{\bf u}_s =
\tfrac{1}{2}\left(\nabla{\bf u}_s + (\nabla{\bf u}_s)^T\right)$ is the
strain rate, $\zeta$ is the bulk viscosity, ${\bf e}_3$ is the
unit vector in the $z$-direction, $\bar{\rho} = \rho_{f} \phi +
\rho_{s}(1 - \phi)$ is the bulk density and $\rho_s$ is the matrix
mass density.

In this paper we are concerned with the efficient solution
of~\eqref{eq:apmckenzie_b}, \eqref{eq:apmckenzie_c}
and~\eqref{eq:apmckenzie_d} for a given porosity field, hence we can
discard~\eqref{eq:apmckenzie_a}. Decomposing the melt pressure as $p_f
= p - \rho_{s}gz$, where $p$ is the dynamic pressure and $\rho_{s}gz$
the `lithostatic' pressure, and substituting~\eqref{eq:apmckenzie_c}
into~\eqref{eq:apmckenzie_b} to eliminate the Darcy flux~${\bf q}$,
we obtain
\begin{subequations}
  \label{eq:apmckenzie_sub}
  \begin{alignat}{1}
    \label{eq:apmckenzie_sub_a}
    -\nabla\cdot\left(2\eta{\bf D}{\bf u}_s\right)
    + \nabla p &=
    \nabla\left(\left(\zeta -
        \frac{2}{3}\eta\right)\nabla\cdot{\bf u}_s\right)
    +g\Delta\rho\phi{\bf e}_3,
    \\
    \label{eq:apmckenzie_sub_b}
    \nabla\cdot{\bf u}_s &=
    \nabla\cdot\left(\frac{k}{\mu}\nabla\left(p-\Delta\rho gz\right)\right),
  \end{alignat}
\end{subequations}
where $\Delta\rho = \rho_s - \rho_f$. Constitutive relations are
required for the permeability and the shear and bulk viscosities. For
now we define
\begin{equation}
  k = k_0k', \quad
  \eta = \eta_0 \eta',\quad
  \zeta = \zeta_0 \zeta',
\end{equation}
where $k_0$, $\eta_0$ and $\zeta_0$ are the characteristic
permeability, shear viscosity and bulk viscosity, respectively, and
$k'$, $\eta'$ and $\zeta'$ are non-dimensional functions that depend
on the porosity~$\phi$.

We non-dimensionalize \eqref{eq:apmckenzie_sub} using
\begin{equation}
  \label{eq:apnondim}
  {\bf u}_s = u_0{\bf u}_s', \
  {\bf x} = H{\bf x}',\
  (\eta, \zeta) = \eta_0(\eta', r_{\zeta}\zeta'),\
  k = k_0k',\
  p=\Delta\rho g H p',
\end{equation}
where primed variables are non-dimensional, $r_{\zeta} =
\zeta_0/\eta_0$, $u_0$ is the velocity scaling given by $u_0 =
\Delta\rho gH^2/\eta_0$ and $H$ is a length scale. Dropping the prime
notation, the two-phase flow equations~\eqref{eq:apmckenzie_sub} in
non-dimensional form are given by
\begin{subequations}
  \label{eq:apmckenzie_nd}
  \begin{alignat}{1}
    \label{eq:apmckenzie_nd_a}
    -\nabla\cdot\left(2\eta{\bf D}{\bf u}_s\right)
    + \nabla p &=
    \nabla\left(\left(r_{\zeta}\zeta -
        \frac{2}{3}\eta\right)\nabla\cdot{\bf u}_s\right)
    +\phi{\bf e}_3,
    \\
    \label{eq:apmckenzie_nd_b}
    \nabla\cdot{\bf u}_s &=
    \nabla\cdot\left(\frac{R^2}{r_{\zeta}+4/3}k\left(\nabla p
        - {\bf e}_3\right)\right),
  \end{alignat}
\end{subequations}
where $R = \delta/H$ and $\delta =
\sqrt{(r_{\zeta}+4/3)\eta_0k_0/\mu}$ is the reference compaction
length~\citep{Takei:2013}.

To simplify the notation, we re-define the non-dimensional
permeability and shear and bulk viscosities as
\begin{equation}
  k \leftarrow \frac{R^2}{r_{\zeta}+4/3}k, \quad
  \eta \leftarrow 2\eta, \quad
  \zeta \leftarrow r_{\zeta}\zeta,
\end{equation}
so that \eqref{eq:apmckenzie_nd} becomes
\begin{subequations}
  \label{eq:apmckenzie_ndn}
  \begin{alignat}{1}
    \label{eq:apmckenzie_ndn_a}
    -\nabla\cdot\left(\eta{\bf D}{\bf u}_s\right)
    + \nabla p
    &=
    \nabla\left(\left(\zeta -
        \frac{1}{3}\eta\right)\nabla\cdot{\bf u}_s\right)
    +\phi{\bf e}_3,
    \\
    \label{eq:apmckenzie_ndn_b}
    \nabla\cdot{\bf u}_s
    &=
    \nabla\cdot\left(k\left(\nabla p
    - {\bf e}_3\right)\right).
  \end{alignat}
\end{subequations}
%
\section{Rates of convergence of the finite element discretization}
\label{ap:femRates}

In this appendix we consider the rate at which the numerical error in
the velocity and fluid pressure fields decreases as function of the
cell size. We use a quadratic polynomial approximation for the
velocity and a linear polynomial approximation for the pressures, and
let~$h$ be a measure of the cell size.

Table~\ref{tab:FEM_p2p1} presents the error in the $L^{2}$ norm for
the two velocity components, $(u,v)$, and fluid pressure, $p$, and
rates at which the errors reduce with decreasing~$h$. We show the
errors and rates of convergence when the solver is terminated at: (a)
a relative preconditioned residual of~$10^{-10}$; and (b) a relative
preconditioned residual of~$10^{-5}$.  When the relative
preconditioned residual reaches $10^{-10}$ we observe that the $L^2$
errors of the velocity and pressure converge at $\mathcal{O}(h^3)$ and
$\mathcal{O}(h^2)$, respectively. However, when we compute the error
with a relative preconditioned residual of only $10^{-5}$, the error
stagnates, with no reduction in the error with mesh refinement.
Similar behaviour is observed for the two field formulation.

\begin{table}
  \begin{center}
    \begin{tabular}{ccccccc}
      \hline
      $N$ & $\norm{u-u_h}_2$ & rate & $\norm{v-v_h}_2$ & rate & $\norm{p-p_h}_2$ & rate \\
      \hline
      &&&&&&\\
      & \multicolumn{6}{c}{Relative preconditioned residual: $10^{-10}$}\\
      &&&&&&\\
      $16^2$& 3.48E-02 & 4.0 & 2.00E-02 & 4.2 & 4.80E-02 & 1.8 \\
      $32^2$& 3.70E-03 & 3.2 & 1.95E-03 & 3.4 & 1.25E-02 & 1.9 \\
      $64^2$& 4.56E-04 & 3.0 & 2.36E-04 & 3.0 & 3.16E-03 & 2.0 \\
      $128^2$& 6.00E-05 & 2.9 & 3.16E-05 & 2.9 & 7.92E-04 & 2.0 \\
      $256^2$& 8.96E-06 & 2.7 & 4.95E-06 & 2.7 & 1.98E-04 & 2.0 \\
      \hline
      &&&&&&\\
      & \multicolumn{6}{c}{Relative preconditioned residual: $10^{-5}$}\\
      &&&&&&\\
      $16^2$& 3.53E-02 & 4.0 & 2.04E-02 & 4.2  & 4.91E-02 & 1.8 \\
      $32^2$& 5.75E-03 & 2.6 & 4.38E-03 & 2.2  & 1.73E-02 & 1.5 \\
      $64^2$& 4.10E-03 & 0.5 & 4.00E-03 & 0.1  & 1.25E-02 & 0.5 \\
      $128^2$& 4.01E-03 & 0.0 & 4.03E-03 & -0.0 & 1.22E-02 & 0.0 \\
      $256^2$& 4.01E-03 & 0.0 & 4.05E-03 & -0.0 & 1.23E-02 & -0.0 \\
      \hline
    \end{tabular}
    \caption{Converge of the finite element solution for
      $P^2$--$P^1$--$P^{1}$ elements on a unit square test with
      $\alpha = 1$ and $(k_{*}, k^{*})=(0.5, 1.5)$ with different
      stopping criteria for the linear solver. Here $(u,v)$ are the
      two velocity components and $p$ is the fluid pressure.}
    \label{tab:FEM_p2p1}
  \end{center}
\end{table}
\section{Two-field preconditioner including viscosity}
\label{ap:twofieldviscosityPreconditioner}

In this appendix we extend the two-field preconditioner
of~\citep{Rhebergen:2014} to include a porosity dependence. The idea
is based on that of \citet{Grinevich:2009}, in which we scale the
pressure mass matrix by the viscosity. The two-field preconditioner is
given by:
\begin{equation}
  \label{eq:twofieldviscosity}
  \mathcal{P}_{2}
  =
  \begin{bmatrix}
    \tilde{K}_{\eta} & 0
    \\
    0 & Q_{\eta} + C_k
  \end{bmatrix},
\end{equation}
where $Q_{\eta}$, $C_k$ and $\tilde{K}_{\eta}$ are the matrices
obtained from, respectively, the discretization of the bilinear forms
$d_{\eta}(\cdot,\cdot)$ in~\eqref{eq:detazeta}, $c(\cdot,\cdot)$
in~\eqref{eq:bilinForms_c} and $\tilde{a}(\cdot, \cdot)$ which is
defined as
\begin{equation}
  \label{eq:Ktilde}
  \tilde{a}({\bf u}, {\bf v})
 =
  \int_{\Omega}\eta{\bf D}{\bf u} : {\bf D}{\bf v} \dif x
  +\int_{\Omega}\left(\zeta -
    \frac{1}{3}\eta\right)(\nabla\cdot{\bf u})(\nabla\cdot{\bf v}) \dif x.
\end{equation}
\end{document}